\documentclass[aps,pre,twocolumn,showpacs,superscriptaddress]{revtex4-2}
\usepackage{amsmath,amssymb,amsthm}
\usepackage{graphicx}
\usepackage{float}
\usepackage{booktabs}
\usepackage{pgfplots}
\pgfplotsset{compat=1.18}
\usepackage{tikz}
\usetikzlibrary{arrows.meta}

\usepackage{comment}

\theoremstyle{plain}
\newtheorem{theorem}{Theorem}[section]
\newtheorem{lemma}[theorem]{Lemma}
\newtheorem{proposition}[theorem]{Proposition}

\theoremstyle{definition}
\newtheorem{definition}[theorem]{Definition}
\theoremstyle{remark}
\newtheorem{remark}[theorem]{Remark}
\newtheorem{conjecture}[theorem]{Conjecture}

\begin{document}

\title{Separatrix structure and the geometry of reset distributions}
\author{Juan Antonio Vega Coso}
\affiliation{Universidad de Salamanca, IUFFyM, Salamanca, Spain}
\date{\today}

\begin{abstract}
We study the geometry of reset distributions for absorbed Markov
processes with geometric resetting, working at an abstract level
that isolates the structural mechanism underlying reset-neutral
invariance. We show that the spectral duality endows the simplex
of reset distributions with a non-trivial \emph{spectral response
geometry}: the simplex $\Delta_{m-1}$ carries a foliation by level
sets of the coupling functional $C$, organized around a critical
manifold $\Sigma$ that acts as a global orientation boundary for
the reset response. Under four structural conditions (S1)--(S4) on
the coupling functional, we establish the existence and explicit
characterization of the separatrix $\Sigma$, derive the invariant
value $C^* = 1/(1+\sqrt{K})$, identify a projective structure in
the spectral coefficients, and prove a global sign principle in the
two-site case. The linear functional $\psi(\gamma)$ emerges as a
global orientation field: numerical evidence suggests
$\operatorname{sgn}(\partial_\gamma C) =
\operatorname{sgn}\langle\pi-\pi^*,\psi(\gamma)\rangle$ for all
$\pi \in \Delta_{m-1}^\circ$. The biased random walk with
multi-site geometric resetting provides a canonical realization.
This is the third paper in a program connecting stochastic
resetting with spectral theory and information geometry.
\end{abstract}

\maketitle
\section{Introduction}\label{sec:intro}

First-passage problems in confined domains lie at the intersection
of probability theory and statistical physics~\cite{Redner2001,Bray2013,Schuss2015}. The classical gambler's ruin problem---the
probability that a biased random walk on a finite interval reaches
one absorbing boundary before the other~\cite{Feller1968}---provides
the canonical setting in which questions of absorption time,
splitting probabilities, and survival statistics admit explicit
closed-form solutions.

Stochastic resetting has emerged in recent years as a fundamental
mechanism in non-equilibrium stochastic processes, with applications
to first-passage problems, search processes, and transport in
confined domains~\cite{EvMaj2011,EvMaj2020,Reuveni2016}. For absorbed
Markov processes, resetting can profoundly alter absorption
probabilities and induce non-trivial stationary behavior~\cite{PalReuveni2017,VillarroelMonteroVega2021}.

Recent studies have revealed the existence of \emph{reset-neutral
distributions}, for which certain observables remain invariant under
changes in the resetting rate. While this phenomenon has been
explicitly identified in specific models, such as biased random
walks with geometric resetting~\cite{PaperI,PaperII}, its structural
origin has remained to be clarified.

The purpose of this work is to identify and formalize the
structural mechanism underlying these invariances. Rather than
focusing on a specific model, we adopt an abstract perspective and
isolate the minimal structural conditions under which such behavior
arises. Our starting point is the observation that, in concrete
realizations, the relevant observables can be expressed in terms of
spectral decompositions involving families of coefficients
$\{A_\nu(z)\}, \{B_\nu(z)\}$ and $\gamma$-dependent weights.
Motivated by this structure, we consider observables of the form
$C : \Delta_{m-1} \times (0,1) \to \mathbb{R}$ satisfying a
\emph{spectral duality} of the form
\[
B_\nu(z) = \kappa(z)\, A_\nu(\sigma(z)),
\]
where $\sigma$ is an involution on the reset sites and the weights
$\kappa(z)$ are independent of the spectral index $\nu$.

Under these structural conditions, we obtain the following results:

\begin{itemize}
\item A \emph{spectral response geometry} on the simplex
      $\Delta_{m-1}$: the coupling functional $C$ induces a
      foliation by level sets, organized around a critical manifold
      $\Sigma$ that acts as a global orientation boundary, with
      $\psi(\gamma)$ as a global orientation field.

\item The existence of a family of reset-neutral distributions
      $\pi^* \in \Sigma$, with invariant value
      $C^* = 1/(1+\sqrt{K})$ depending only on the structural
      compatibility constant $K = \kappa(z)\kappa(\sigma(z))$.

\item A \emph{projective structure} of the spectral coefficients:
      under an additional parity condition, the points
      $[A_\nu(z):B_\nu(z)] \in \mathbb{P}^1$ are confined to
      $[1:\epsilon_\nu\kappa(z)]$, the two sites of each orbit
      being linked by $\kappa(z)\kappa(\sigma(z))=K$ and
      coinciding at $[1:\pm\sqrt{K}]$ only at a neutral site.

\item A \emph{global sign principle} in the two-site case:
      $\operatorname{sgn}(\partial_\gamma C) =
      \operatorname{sgn}(\alpha - \alpha^*)\cdot\operatorname{sgn}(R')$,
      where $R(\gamma)$ is a monotone ratio function whose
      monotonicity is verified via Karlin--McGregor
      TP$_2$~\cite{KarlinMcGregor1959,Karlin1968}.
\end{itemize}

The biased random walk with multi-site geometric resetting provides
a canonical realization of this framework. In this case, the
structural conditions can be verified explicitly, and the general
results recover and reinterpret known invariance phenomena in
geometric terms.

This work constitutes the third paper in a broader program. The
first two works focused on specific models and the identification
of invariance phenomena~\cite{PaperI,PaperII}. The present work
extracts the underlying structural mechanism and formulates it
abstractly, focusing on the geometric and structural aspects of
the theory. The operator-theoretic origin of the spectral duality,
including its relation to resolvent methods and Perron--Frobenius
theory, will be developed in a companion paper. The connection
with information geometry and non-equilibrium thermodynamics will
be addressed in a forthcoming companion paper.

The paper is organized as follows. Section~\ref{sec:setting} introduces the
abstract setting and formulates the structural conditions.
Section~\ref{sec:existence} establishes the existence of the critical distribution
and the explicit form of the invariant. Section~\ref{sec:projective} develops the
projective interpretation. Section~\ref{sec:geometric} presents the linear response
theory and the associated hyperplane structure. Section~\ref{sec:twosite}
establishes the global separatrix in the two-site case. Section~\ref{sec:canonical}
analyzes the canonical realization provided by the biased random
walk. Section~\ref{sec:numerics} presents numerical illustrations and multi-site
structures. Section~\ref{sec:conjecture} formulates the global
orientation principle as a conjecture. We conclude in
Section~\ref{sec:conclusions} with perspectives and open
problems. An appendix verifies the structural monotonicity
condition for the random walk via Karlin--McGregor positivity.

\section{Structural setting}\label{sec:setting}

\subsection{Setting}

Let $m \geq 2$ and let $\{z_1, \ldots, z_m\}$ be a finite set of
distinguished sites, the \emph{reset sites}. The set of probability
distributions supported on these sites is the simplex
\[
\Delta_{m-1} = \Bigl\{\pi \in \mathbb{R}^m :
\pi_i \geq 0\ \forall i,\ \textstyle\sum_{i=1}^m \pi_i = 1\Bigr\}.
\]
We denote by $\Delta_{m-1}^\circ$ its relative interior, where
$\pi_i > 0$ for all $i$. The parameter $\gamma \in (0,1)$ represents
the resetting rate.

\subsection{The coupling functional and structural
assumptions}

Our object of study is a smooth functional
\[
C : \Delta_{m-1} \times (0,1) \longrightarrow \mathbb{R},
\]
referred to as the \emph{coupling functional}. In concrete models,
$C(\pi, \gamma)$ corresponds to a coupling constant associated with
absorption probabilities under resetting; in the canonical
realization of Section~\ref{sec:canonical}, it takes the form
$C(\pi,\gamma) = \bar u_\pi(\gamma)/\bar s_\pi(\gamma)$. Rather
than specifying the underlying Markov process, we formulate
structural assumptions directly at the level of $C$ and its
associated spectral data.

We assume that $C$ satisfies the following four conditions.

\textbf{(S1) Regularity.} For each fixed $\gamma \in (0,1)$, the
map $\pi \mapsto C(\pi, \gamma)$ is of class $\mathcal{C}^1$ on
$\Delta_{m-1}^\circ$ and extends continuously to the boundary.

\textbf{(S2) Spectral representability.} There exist functions
$u(z; \gamma)$ and $s(z; \gamma)$ such that
\[
C(\pi, \gamma) = \frac{\sum_{i=1}^m \pi_i\, u(z_i; \gamma)}
                       {\sum_{i=1}^m \pi_i\, s(z_i; \gamma)},
\]
where the denominator is strictly positive for all
$\pi \in \Delta_{m-1}$ and $\gamma \in (0,1)$. Moreover, $u$ and
$s$ admit finite spectral decompositions
\[
\begin{aligned}
u(z;\gamma) &= \textstyle\sum_{\nu=1}^N f_\nu(\gamma) A_\nu(z),\\
s(z;\gamma) &= \textstyle\sum_{\nu=1}^N f_\nu(\gamma)(A_\nu(z)+B_\nu(z)),
\end{aligned}
\]
where $N \geq 1$, the functions $\{f_\nu(\gamma)\}_{\nu=1}^N$ are
linearly independent on $(0,1)$, and, whenever the sites $z_i$
are distinct, the $m$ vectors
$\bigl\{\,(A_\nu(z_i))_{\nu=1}^N\,\bigr\}_{i=1}^m$ are linearly
independent in $\mathbb{R}^N$. In
particular, the number of reset sites satisfies $m\le N$, and the
matrix $(A_\nu(z_i))_{\nu,i}\in\mathbb{R}^{N\times m}$ has full
column rank $m$.

\medskip\noindent\textbf{Remark} (Scope of (S2)). The assumption of
a finite spectral decomposition covers all discrete realizations
considered in this paper. Extensions to processes with continuous
or infinite-dimensional spectra require appropriate modifications
and are deferred to future work.\medskip

\textbf{(S3) Spectral duality.} There exist an involution $\sigma$
on the reset sites ($\sigma^2 = \mathrm{id}$) and positive weights
$\kappa(z) > 0$, \emph{independent of the spectral mode index
$\nu$}, such that
\[
B_\nu(z) = \kappa(z)\, A_\nu(\sigma(z))
\qquad \text{for all } \nu,\, z.
\]
This relation expresses a \emph{twisted symmetry}: the involution
$\sigma$ acts on sites while $\kappa$ rescales spectral weights,
with the rescaling independent of the mode $\nu$. The case
$\sigma = \mathrm{id}$ (every site a fixed point) is admissible but
degenerate: there are no spectral pairs and every distribution
would be critical. We therefore assume henceforth that $\sigma$
has at least one non-trivial pair, i.e., at least one site $z$
with $\sigma(z) \neq z$.

\textbf{(S4) Compatibility.} There exists a positive value $K$,
the same for all reset sites, such that
\[
\kappa(z)\, \kappa(\sigma(z)) = K
\qquad \text{for all } z \in \{z_1, \ldots, z_m\}.
\]
A site $z_0$ with $\sigma(z_0) = z_0$ is called a \emph{neutral
site}; for such sites, (S4) reduces to $\kappa(z_0) = \sqrt{K}$,
and consistency with (S3) gives $B_\nu(z_0) = \sqrt{K}\,A_\nu(z_0)$.

\medskip\noindent\textbf{Remark 2.1} (Structure and hierarchy).
Conditions (S1)--(S4) are organized hierarchically: (S1) provides
regularity, (S2) the spectral substrate, (S3) the key structural
ingredient, and (S4) algebraic compatibility. The $\nu$-independence
of $\kappa$ in (S3) is the essential feature: it allows the
mode-by-mode constraints of Section~\ref{sec:existence} to decouple. The set
(S1)--(S4) is essentially minimal: removing (S3) or relaxing the
$\nu$-independence of $\kappa$ destroys in general the existence of
reset-neutral distributions and the projective structure of
Sections~\ref{sec:existence}--\ref{sec:projective}.\medskip

\medskip\noindent\textbf{Remark 2.2} (Dependence on observables).
Conditions (S1)--(S4) are properties of the coupling functional
and its spectral data, not of the underlying Markov process. The
framework applies whenever (S1)--(S4) hold, regardless of which
process produces the observable.\medskip

\medskip\noindent\textbf{Remark 2.3} (Canonical example). The
biased random walk with multi-site geometric resetting (Paper~II,
arXiv:2605.00657) satisfies (S1)--(S4) with $\sigma(z) = a - z$,
$\kappa(z) = (p/q)^{a-z}$, $K = (p/q)^a$, $N = a-1$, and
$f_\nu(\gamma) = (1-\gamma)/(1-\lambda_\nu(1-\gamma))$
with $\lambda_\nu = 2\sqrt{pq}\cos(\pi\nu/a)$. The assumptions
are therefore non-vacuous.\medskip

\subsection{Critical distributions and the
separatrix}

A distribution $\pi^* \in \Delta_{m-1}$ is \emph{reset-neutral}
(or \emph{critical}) if the coupling functional is independent of
the resetting rate, i.e., if there exists a constant $C^*$ such
that
\[
C(\pi^*, \gamma) = C^* \qquad \text{for all } \gamma \in (0,1).
\]
The set of reset-neutral distributions,
\[
\Sigma := \left\{\pi \in \Delta_{m-1} :
C(\pi, \gamma) \text{ is constant in } \gamma\right\},
\]
will be called the \emph{separatrix}. This set acts as a geometric
boundary separating regions of the simplex where the
$\gamma$-dependence of $C$ exhibits qualitatively distinct behavior:
on one side $C$ increases with $\gamma$, on the other it decreases,
and on $\Sigma$ it remains constant. The global structure of these
regions will be analyzed in Section~\ref{sec:numerics}.

\medskip\noindent\textbf{Remark 2.4} (Spatial vs.\ dynamical
criticality). At $\pi^*$, the derivative $\partial_\gamma C$
vanishes, \emph{not} the gradient $\nabla_\pi C$. The distribution
$\pi^*$ is critical with respect to $\gamma$, not necessarily an
extremum of $C$ on the simplex. Under (S1), the condition
$C(\pi^*, \gamma) = \mathrm{const}$ is equivalent to
$\partial_\gamma C(\pi^*, \gamma) = 0$ for all $\gamma$.

The existence and characterization of the separatrix, together with
the geometric structure it induces on the simplex, are the central
objects of study in this work.\medskip

\section{Existence and structure of reset-neutral distributions}\label{sec:existence}

We show that, under assumptions (S1)--(S4), reset-neutrality is not
an exceptional property but a structural consequence of the twisted
symmetry. In this section we establish the main result of the paper:
reset-neutral distributions exist, their characterization is
explicit, and the associated invariant value depends only on the
compatibility constant~$K$.

\subsection{Structural reduction}

The spectral duality (S3) induces a pairing of reset sites along
involutive orbits $\{z, \sigma(z)\}$. The following two lemmas
isolate the algebraic mechanism by which this pairing forces
the existence of reset-neutral distributions.

\medskip\noindent\textbf{Lemma 3.1 (Orbit constraint).}

\textit{For each spectral mode $\nu$ and each spectral pair
$(z,\sigma(z))$ with $\sigma(z)\neq z$, the ratios
$r_\nu(z):=B_\nu(z)/A_\nu(z)$, defined wherever
$A_\nu(z)\neq0$, satisfy}
\[
r_\nu(z)r_\nu(\sigma(z))=K.
\]

\smallskip
\noindent
\emph{Remark.}
In the canonical realization of
Section~\ref{sec:canonical},
$A_\nu(z)\neq0$ unless $a\mid \nu z$.
The orbit constraint is stated in multiplicative form for
convenience, but the division-free identity
$B_\nu(z)=\kappa(z)A_\nu(\sigma(z))$ of (S3)
holds for all $\nu,z$. In particular, Theorem~3.3 never divides
by $A_\nu(z)$: it equates coefficients via the linear
independence of~(S2).

\textit{Proof.} From (S3),
$B_\nu(z) = \kappa(z) A_\nu(\sigma(z))$ and
$B_\nu(\sigma(z)) = \kappa(\sigma(z)) A_\nu(z)$.
Dividing each identity by the corresponding $A_\nu$ and multiplying
the results gives $r_\nu(z)\, r_\nu(\sigma(z)) =
\kappa(z)\kappa(\sigma(z)) = K$ by (S4). \hfill$\square$

\medskip\noindent\textbf{Lemma 3.2} (Mode-by-mode decoupling).
\textit{Under (S2), the condition $C(\pi,\gamma) \equiv C^*$
for all $\gamma \in (0,1)$ is equivalent to}
\[
U_\nu(\pi) = C^* S_\nu(\pi) \qquad \forall\, \nu = 1, \ldots, N,
\]
\textit{where $U_\nu(\pi) := \sum_i \pi_i A_\nu(z_i)$ and
$S_\nu(\pi) := \sum_i \pi_i (A_\nu(z_i) + B_\nu(z_i))$.}

\textit{Proof.} By (S2),
$\bar u_\pi(\gamma) = \sum_\nu f_\nu(\gamma) U_\nu(\pi)$ and
$\bar s_\pi(\gamma) = \sum_\nu f_\nu(\gamma) S_\nu(\pi)$.
The condition $\bar u_\pi = C^* \bar s_\pi$ for all $\gamma$ gives
$\sum_\nu f_\nu(\gamma)(U_\nu - C^* S_\nu) = 0$ for all $\gamma$.
Linear independence of $\{f_\nu\}$ forces each coefficient to
vanish. Thus, the functional identity in $\gamma$ reduces to a
finite system of algebraic constraints --- one per spectral mode ---
that can be solved explicitly. \hfill$\square$

\subsection{Main theorem}

\medskip\noindent\textbf{Theorem 3.3} (Existence and characterization
of reset-neutral distributions). \textit{Under assumptions
(S1)--(S4), there exists a non-empty family of distributions
$\pi^* \in \Delta_{m-1}$, with $\pi^* \in \Delta_{m-1}^\circ$
provided all neutral weights are chosen strictly positive,
such that $C(\pi^*, \gamma) = C^*$ for all $\gamma \in (0,1)$,
where}
\[
C^* = \frac{1}{1+\sqrt{K}}.
\]
\textit{More precisely:}
\begin{itemize}
\item[(i)] \textit{Each $\pi^*$ in the family satisfies}
\[
\frac{\pi^*_z}{\pi^*_{\sigma(z)}}
= \sqrt{\frac{\kappa(\sigma(z))}{\kappa(z)}}
\qquad \text{for all } z \text{ with } \sigma(z)\neq z.
\]
\item[(ii)] \textit{For each neutral site $z_0$ with
$\sigma(z_0) = z_0$, the weight $\pi^*_{z_0} \geq 0$ is
completely free, subject only to normalization.}
\item[(iii)] \textit{The value $C^*$ depends only on $K$,
not on the specific choice of $\pi^*$ within the family nor
on the neutral weights.}
\end{itemize}

\textit{Proof.} By Lemma~3.2, the condition $C(\pi^*,\gamma)
\equiv C^*$ reduces to the mode-by-mode system
$U_\nu(\pi^*) = C^* S_\nu(\pi^*)$ for all $\nu$. Expanding via
(S3):
\[
(1-C^*)\sum_i \pi^*_i A_\nu(z_i)
= C^* \sum_i \pi^*_i \kappa(z_i) A_\nu(\sigma(z_i)).
\]
Reindexing the right-hand side via $i \mapsto \sigma(i)$
(a bijection since $\sigma^2 = \mathrm{id}$):
\[
(1-C^*)\sum_i \pi^*_i A_\nu(z_i)
= C^* \sum_i \pi^*_{\sigma(i)}\kappa(\sigma(z_i)) A_\nu(z_i).
\]
By the non-degeneracy of the spectral representation (S2)
--- specifically, the linear independence of the vectors
$\{(A_\nu(z_i))_{\nu=1}^N\}_{i=1}^m$ in $\mathbb{R}^N$,
which is distinct from the linear independence of $\{f_\nu\}$
used in Lemma~3.2 --- the coefficients must match site by site:
\[
(1-C^*)\,\pi^*_i = C^*\,\kappa(\sigma(z_i))\,\pi^*_{\sigma(i)}
\qquad \forall\, i. \tag{$\star$}
\]
This site-independence is part of hypothesis (S2): the matrix
$(A_\nu(z_i))_{\nu,i}$ has full column rank $m\le N$. In the
canonical realization it is a submatrix of the discrete sine
transform and has full rank for any distinct interior sites
(verified in Sec.~\ref{sec:canonical}).

\textit{Spectral pairs.} Applying $(\star)$ to both $z_i$ and
$\sigma(z_i)$ and multiplying:
\[
(1-C^*)^2 = (C^*)^2\,\kappa(z_i)\kappa(\sigma(z_i)) = (C^*)^2 K.
\]
Since $C^* \in (0,1)$, taking the positive square root gives
$C^* = 1/(1+\sqrt{K})$, establishing part (iii). Dividing the
two instances of $(\star)$ gives part (i).

\textit{Neutral sites.} For $z_0$ with $\sigma(z_0)=z_0$,
equation $(\star)$ gives
$(1-C^*)\pi^*_{z_0} = C^*\kappa(z_0)\pi^*_{z_0}$.
Since $\kappa(z_0)=\sqrt{K}$ by (S4) and
$C^* = 1/(1+\sqrt{K})$, both sides are equal for any value
of $\pi^*_{z_0} \geq 0$, establishing part (ii).

\textit{Non-emptiness and structure of the family}. The ratios (i) fix the
internal split of the mass of each orbit but leave the total mass
of each orbit free; together with the neutral weights and
normalization, these choices parametrize the family of critical
distributions (see Proposition~5.1 for the complete description).
Choosing all orbit masses and neutral weights strictly positive
yields $\pi^*\in\Delta^\circ_{m-1}$, so the separatrix is
non-empty. \hfill$\square$

\medskip\noindent\textbf{Remark 3.4} (Choice of a distinguished
critical distribution).
The separatrix $\Sigma$ contains infinitely many distributions
whenever the family of critical distributions is not reduced to a
single point (equivalently, $\dim\Sigma\ge1$).
For definiteness, we select a distinguished element
$\pi^* \in \Sigma$ by assigning equal total mass to each involutive
orbit and to each neutral site, with the internal split of every
orbit fixed by the ratio constraints of Theorem~3.3. This
determines $\pi^*$ uniquely; all its components are strictly
positive, so $\pi^* \in \Delta_{m-1}^\circ$. When there is a single
orbit and no neutral site ($m=2$), $\Sigma$ consists of a single
point and $\pi^*$ is that point.

This choice provides a fixed interior reference point for the
geometry of the simplex; all results that depend only on $\Sigma$
--- such as the sign of $\partial_\gamma C$ and the linear
functional $\langle\pi-\pi^*,\psi(\gamma)\rangle$ --- are
independent of the specific choice of $\pi^*$ within $\Sigma$.
In particular, the geometric constructions of
Sections~\ref{sec:geometric}--\ref{sec:canonical} and
the information-geometric interpretation developed in future
work (Paper~V of this program) require a fixed reference point
in the open simplex, not a family; the present convention ensures
this consistency~\cite{PaperII}.

\medskip\noindent\textbf{Remark 3.5} (Projective interpretation).
Theorem~3.3 shows that reset-neutrality is not a coincidence but a
structural consequence of the twisted symmetry: the algebraic
constraints induced by the duality force the existence of critical
distributions within the simplex. The value
$C^* = 1/(1+\sqrt{K})$ can be interpreted as the projective
coordinate of a fixed point in $\mathbb{P}^1$. This viewpoint will
be further developed in Section~\ref{sec:projective}.

\medskip\noindent\textbf{Remark 3.6} (Connection with Paper~II).
For the biased random walk with $K = (p/q)^a$, Theorem~3.3
recovers the main result of arXiv:2605.00657: the invariant
value $C^* = q^{(0)}_{a/2}$ (classical ruin probability from
the midpoint) and the characterization
$\pi^*_z/\pi^*_{a-z} = (q/p)^{a/2-z}$.

\section{Projective structure of the spectral duality}\label{sec:projective}

The main theorem of Section~\ref{sec:existence} establishes the existence of
reset-neutral distributions and the explicit form of $C^*$.
In this section we reveal the geometric mechanism underlying
this result: the spectral coefficients $A_\nu(z)$ and $B_\nu(z)$
are constrained by the duality (S3) to lie on a finite set of
privileged points in projective space. This projective confinement
is the geometric heart of the paper.

\subsection{Projective points and the duality}

For each spectral mode $\nu$ and each reset site $z$, define the
projective point
\[
P_\nu(z) := [A_\nu(z) : B_\nu(z)] \in \mathbb{P}^1,
\]
representing the ratio of the two spectral channels at site $z$
and mode $\nu$.

\medskip\noindent\textbf{Lemma 4.1} (Projective confinement
under parity). \textit{The following result is not a consequence
of (S1)--(S4) alone; it requires an additional parity symmetry,
satisfied by the canonical biased random walk
(Section~\ref{sec:canonical}) but not assumed in a general
realization. Concretely, assume (S3) and that the spectral
coefficients satisfy}
\[
A_\nu(\sigma(z)) = \epsilon_\nu A_\nu(z)
\quad \text{for some } \epsilon_\nu \in \{+1,-1\}.
\]
\textit{Then for each spectral mode $\nu$ and each reset site $z$
with $A_\nu(z)\neq0$,}
\[
P_\nu(z) = [1 : \epsilon_\nu \kappa(z)].
\]

\textit{Proof.} Fix $(\nu,z)$ with $A_\nu(z)\neq0$. From (S3) and
the parity hypothesis,
$B_\nu(z) = \kappa(z)A_\nu(\sigma(z)) = \kappa(z)\epsilon_\nu
A_\nu(z)$, so $B_\nu(z)/A_\nu(z) = \epsilon_\nu\kappa(z)$ and
hence $P_\nu(z) = [1:\epsilon_\nu\kappa(z)]$. (When $A_\nu(z)=0$
the parity relation gives $B_\nu(z)=0$ as well, so no projective
point is associated with that mode at that site; such modes are
excluded from the statement and play no role in the sequel,
where only the ratios at non-vanishing coefficients are used.)
\hfill$\square$

\medskip\noindent\textbf{Remark 4.2.} Without the parity
condition, (S3) alone implies only the orbit constraint
$r_\nu(z)r_\nu(\sigma(z)) = K$ (Lemma~3.1), which does not
determine $P_\nu(z)$ individually. Under the parity condition,
Lemma~4.1 gives $P_\nu(z) = [1:\epsilon_\nu\kappa(z)]$, and by
(S4) the product $\kappa(z)\kappa(\sigma(z)) = K$ relates the
two sites in a pair. For the biased random walk (Section~\ref{sec:canonical}),
$\kappa(z) = (p/q)^{a-z}$ and $\epsilon_\nu = (-1)^{\nu+1}$,
giving $P_\nu(z) = [1:(-1)^{\nu+1}(p/q)^{a-z}]$.

\subsection{The critical distribution as projective
fixed point}

\medskip\noindent\textbf{Proposition 4.3} (Weighted
projective alignment). \textit{Under (S1)--(S4), the critical
distribution $\pi^*$ of Theorem~3.3 satisfies the modal identity}
\[
(1-C^*)\sum_i \pi^*_i A_\nu(z_i)
= C^*\sum_i \pi^*_i B_\nu(z_i)
\qquad \forall\,\nu = 1,\ldots,N,
\]
\textit{with $C^*=1/(1+\sqrt{K})$. Equivalently, whenever
$\sum_i\pi^*_i A_\nu(z_i)\neq0$, the weighted spectral channels
align to the fixed ratio}
\[
\frac{\sum_i \pi^*_i B_\nu(z_i)}{\sum_i \pi^*_i A_\nu(z_i)}
= \sqrt{K}.
\]
\textit{That is, $\pi^*$ aligns the spectral channels in a
weighted average sense to the ratio $\sqrt{K}$, and the invariant
value $C^*$ is the affine coordinate corresponding to this ratio.}

\textit{Proof.} By Theorem~3.3, $U_\nu(\pi^*) = C^*S_\nu(\pi^*)$
for all $\nu$, i.e.,
$(1-C^*)\sum_i\pi^*_i A_\nu(z_i) = C^*\sum_i\pi^*_i B_\nu(z_i)$,
which is the stated identity; it holds for every $\nu$ without
any non-vanishing assumption, since both sides may vanish
simultaneously. For the modes with
$\sum_i\pi^*_i A_\nu(z_i)\neq0$ one may divide, obtaining the
ratio $\sqrt{K}$ from $(1-C^*)=C^*\sqrt{K}$, which also yields
$C^*=1/(1+\sqrt{K})$. \hfill$\square$

\medskip\noindent\textbf{Remark 4.4.} This weighted alignment is
weaker than pointwise confinement: individual sites may have
$r_\nu(z_i) \neq \sqrt{K}$, but their $\pi^*$-weighted average
equals $\sqrt{K}$ for every mode $\nu$. Under the additional
parity condition of Lemma~4.1, each site satisfies the pointwise
relation $P_\nu(z_i) = [1:\epsilon_\nu\kappa(z_i)]$, with the two
sites of an orbit linked by $\kappa(z_i)\kappa(\sigma(z_i))=K$.
Only at a neutral site $z_0=\sigma(z_0)$, where
$\kappa(z_0)=\sqrt{K}$, does this reduce to
$P_\nu(z_0)=[1:\pm\sqrt{K}]$.

\medskip\noindent\textbf{Remark 4.5} (Canonical realization). For
the biased random walk (arXiv:2605.00657), the parity condition
holds with $\epsilon_\nu = (-1)^{\nu+1}$, giving
$P_\nu(z) = [1:(-1)^{\nu+1}(p/q)^{a-z}]$, consistent with
Remark~4.2. At the midpoint $z=a/2$ this specializes to
$[1:(-1)^{\nu+1}(p/q)^{a/2}]=[1:\pm\sqrt{K}]$, recovering the
projective confinement of Paper~II at the neutral site.

\section{Geometric structure of the critical set}\label{sec:geometric}

In this section we describe the structure induced on the simplex
$\Delta_{m-1}$ by the spectral duality. Rather than introducing
new algebraic results, we interpret the orbit constraints of
Section~\ref{sec:existence} geometrically and explain the phase-like behaviour
of $C(\pi,\gamma)$ observed in the dependence on $\gamma$.

\subsection{The separatrix as an affine manifold}

Recall from Section~\ref{sec:setting} that the separatrix is
\[
\Sigma = \{\pi \in \Delta_{m-1} :
C(\pi,\gamma) \equiv C^* \text{ for all } \gamma\}.
\]

\medskip\noindent\textbf{Proposition 5.1} (Structure of the
separatrix). \textit{Under (S1)--(S4), the separatrix $\Sigma$
is a non-empty convex subset of $\Delta_{m-1}$ characterized by:}
\begin{itemize}
\item[(i)] \textit{One ratio constraint per involutive orbit
      $\{z,\sigma(z)\}$:}
\[
\frac{\pi_z}{\pi_{\sigma(z)}} =
\sqrt{\frac{\kappa(\sigma(z))}{\kappa(z)}}.
\]
\item[(ii)] \textit{No constraint on neutral sites
      ($\sigma(z_0)=z_0$), except normalization.}
\end{itemize}
\textit{In particular, $\Sigma$ is an affine simplex of dimension
$n_{\rm pair}+n_{\rm neutral}-1$, where $n_{\rm pair}$ is the
number of involutive orbits of size two and $n_{\rm neutral}$ the
number of neutral (fixed) sites. The ratio constraint within each
orbit fixes the internal split of that orbit's mass but leaves the
total orbit mass free; together with the masses of the neutral
sites, these $n_{\rm pair}+n_{\rm neutral}$ total masses span
$\Sigma$, subject only to normalization. When there is a single
orbit and no neutral site ($m=2$), $\Sigma$ consists of a single
point.}

\textit{Proof.} The characterization follows from Theorem~3.3.
The $m$ site masses split into $2n_{\rm pair}+n_{\rm neutral}=m$
variables. Each of the $n_{\rm pair}$ orbits imposes one ratio
constraint~(i), leaving $m-n_{\rm pair}$ free; neutral sites
impose no constraint~(ii). Normalization removes one further
degree of freedom, giving
$\dim\Sigma=m-n_{\rm pair}-1=n_{\rm pair}+n_{\rm neutral}-1$.
Convexity holds because the constraints are
linear and preserved under convex combinations. \hfill$\square$

\subsection{The linear response functional}
\label{subsec:linear_response}

The geometric content of Sections~\ref{sec:existence}--\ref{sec:projective} is most transparently
expressed through the first-order response of the coupling
functional to perturbations of the reset distribution. We make
this precise here.

Fix $\pi^* \in \Sigma$ and let
\[
T_{\pi^*}\Delta_{m-1} =
\Bigl\{h \in \mathbb{R}^m : \textstyle\sum_i h_i = 0\Bigr\}
\]
denote the tangent space of the simplex at $\pi^*$. By (S1),
for each $\gamma\in(0,1)$ the map $\pi\mapsto C(\pi,\gamma)$ is
of class $\mathcal{C}^1$ on $\Delta_{m-1}^\circ$, so its
directional derivative at $\pi^*$ defines a linear functional
on the tangent space.

\medskip\noindent\textbf{Definition 5.2} (Linear response
functional). \textit{For each $\gamma\in(0,1)$, the
\emph{linear response functional} $\psi(\gamma) \in
(T_{\pi^*}\Delta_{m-1})^*$ is the differential of
$C(\cdot,\gamma)$ at $\pi^*$, characterized by}
\[
\langle h, \psi(\gamma)\rangle
:= \frac{d}{d\epsilon}\,
C(\pi^*+\epsilon h,\gamma)\Big|_{\epsilon=0},
\qquad h \in T_{\pi^*}\Delta_{m-1}.
\]

An explicit expression follows from (S2). Writing
$\bar u_\pi = \sum_\nu f_\nu(\gamma)U_\nu(\pi)$ and
$\bar s_\pi = \sum_\nu f_\nu(\gamma)S_\nu(\pi)$ as in Lemma~3.2,
the quotient rule gives, for $h\in T_{\pi^*}\Delta_{m-1}$,
\begin{equation}
\langle h,\psi(\gamma)\rangle
= \frac{1}{\bar s_{\pi^*}(\gamma)}
\sum_{i=1}^m h_i\bigl[u(z_i;\gamma)
- C^*\,s(z_i;\gamma)\bigr],
\label{eq:psi_explicit}
\end{equation}
where we used $C(\pi^*,\gamma)=C^*$. Thus $\psi(\gamma)$ is
represented, in the ambient coordinates of $\mathbb{R}^m$, by
the vector with components
$\bar s_{\pi^*}(\gamma)^{-1}[u(z_i;\gamma)-C^*s(z_i;\gamma)]$,
projected onto $T_{\pi^*}\Delta_{m-1}$.

\medskip\noindent\textbf{Definition 5.3} (Response span and
local separatrix). \textit{Let}
\begin{align*}
V &:= \operatorname{span}\{\psi(\gamma) : \gamma\in(0,1)\}
   \subseteq (T_{\pi^*}\Delta_{m-1})^*, \\
r &:= \dim V.
\end{align*}
\textit{The \emph{local separatrix} at $\pi^*$ is the subspace
of tangent directions along which $C$ is insensitive to
$\gamma$ at first order:}
\[
\Sigma_{\mathrm{loc}}
:= \bigcap_{\gamma\in(0,1)} \ker\psi(\gamma)
\subseteq T_{\pi^*}\Delta_{m-1}.
\]

The following result is the linear-algebraic core of the
geometric picture: it shows that the local separatrix is
governed entirely by the dimension of the response span,
independently of the parametrization in $\gamma$.

\medskip\noindent\textbf{Theorem 5.4} (Hyperplane fan and the
dimension of the local separatrix). \textit{Under
\textnormal{(S1)--(S4)}, the family
$\{\ker\psi(\gamma)\}_{\gamma\in(0,1)}$ is a fan of hyperplanes
in $T_{\pi^*}\Delta_{m-1}$ whose common intersection is the
annihilator of the response span:}
\[
\Sigma_{\mathrm{loc}}
= V^\perp,
\qquad
\dim\Sigma_{\mathrm{loc}} = (m-1) - r.
\]
\textit{In particular, $\Sigma_{\mathrm{loc}}$ depends only on
the subspace $V$, not on the individual functionals
$\psi(\gamma)$; and the first-order variation of $C$ vanishes
for all $\gamma$ exactly along the $(m-1-r)$-dimensional
subspace $V^\perp$.}

\textit{Proof.} A direction $h\in T_{\pi^*}\Delta_{m-1}$
satisfies $\langle h,\psi(\gamma)\rangle=0$ for all
$\gamma\in(0,1)$ if and only if $h$ annihilates every element
of $\operatorname{span}\{\psi(\gamma)\}=V$; that is, if and
only if $h\in V^\perp$. Hence
$\Sigma_{\mathrm{loc}}=\bigcap_\gamma\ker\psi(\gamma)=V^\perp$.
Since $\dim T_{\pi^*}\Delta_{m-1}=m-1$ and the Euclidean inner
product identifies the tangent space with its dual,
$\dim V^\perp = (m-1)-\dim V = (m-1)-r$. \hfill$\square$

\medskip\noindent\textbf{Corollary 5.5} (Random walk: $r=1$).
\textit{For the biased random walk of Section~\ref{sec:canonical}
with a single pair of reset sites ($m=2$), $r=1$ and
$\Sigma_{\mathrm{loc}}$ has
codimension one in $T_{\pi^*}\Delta_1$: since that tangent space
is one-dimensional, $\Sigma_{\mathrm{loc}}$ reduces to the unique
tangent point to $\Sigma$, independent of
$\gamma$. The local separatrix then coincides with the tangent
space of the global separatrix $\Sigma$ of Proposition~5.1.}

\textit{Proof.} By~\eqref{eq:psi_explicit}, $\psi(\gamma)$ is
proportional to the vector with components
$u(z_i;\gamma)-C^*s(z_i;\gamma)$, which varies with $\gamma$. For a
single orbit
$\{z_1,z_2=\sigma(z_1)\}$ with no neutral sites, $m=2$ and the
tangent space $T_{\pi^*}\Delta_1$ is one-dimensional. Since
$\psi(\gamma)$ is a non-zero element of this one-dimensional space
for every $\gamma$, every such $\psi(\gamma)$ spans the same
one-dimensional space, so $r=1$. The codimension formula
of Theorem~5.4 gives $\dim\Sigma_{\mathrm{loc}}=(m-1)-1=m-2=0$,
matching $\dim\Sigma=n_{\rm pair}+n_{\rm neutral}-1=0$ from
Proposition~5.1 for a single orbit with no neutral sites.
\hfill$\square$

\medskip\noindent\textbf{Remark 5.6} (Geometric meaning of $r$).
The integer $r=\dim V$ measures how strongly the linear
response rotates with $\gamma$. When $r=1$ the hyperplane
$\ker\psi(\gamma)$ does not rotate: a single separatrix
direction governs the linear response for all resetting rates.
Locally this yields a single, $\gamma$-independent tangent
hyperplane at $\pi^*$; the corresponding global statement --- that
the whole simplex then splits into exactly two monotone regions
--- is established here only for the two-site case ($m=2$,
Corollary~5.5) and observed numerically for $m=3$
(Section~\ref{sec:numerics}), but is not claimed to follow from
(S1)--(S4) in general. When
$r\geq 2$ the hyperplane genuinely rotates with $\gamma$,
and the local bipartition is replaced by a richer regime
structure (see Figure~\ref{fig:global_geometry} and
Section~\ref{sec:numerics}).
The classification of response landscapes by the value of
$r$ is taken up in Section~\ref{sec:conjecture}.

\subsection{Tangential and transversal
perturbations}

We now translate Theorem~5.4 into the language of the
orbit-induced ratio constraints of Theorem~3.3.

\medskip\noindent\textbf{Proposition 5.7} (Tangential invariance).
\textit{Let $h \in T_{\pi^*}\Delta_{m-1}$ satisfy the linearized
ratio constraints
$h_z/\pi^*_z = h_{\sigma(z)}/\pi^*_{\sigma(z)}$
for all orbits. Then}
\[
\partial_\epsilon C(\pi^*+\epsilon h,\gamma)\big|_{\epsilon=0}
= 0 \qquad \forall\,\gamma \in (0,1).
\]
\textit{Equivalently, $h\in\Sigma_{\mathrm{loc}}=V^\perp$:
the linearized ratio constraints cut out precisely the local
separatrix of Theorem~5.4.}

\textit{Proof.} Write $\bar u_\pi(\gamma)=\sum_i\pi_i u(z_i;\gamma)$
and $\bar s_\pi(\gamma)=\sum_i\pi_i s(z_i;\gamma)$, so that
$C(\pi,\gamma)=\bar u_\pi/\bar s_\pi$. Differentiating at
$\pi=\pi^*$ in the direction $h$,
\[
\partial_\epsilon C\big|_{\epsilon=0}
= \frac{1}{\bar s_{\pi^*}}
\Bigl(\textstyle\sum_i h_i u(z_i;\gamma)
- C^*\sum_i h_i s(z_i;\gamma)\Bigr),
\]
using $\bar u_{\pi^*}=C^*\bar s_{\pi^*}$. Expanding in the
spectral basis (S2), $\sum_i h_i u(z_i;\gamma)
=\sum_\nu f_\nu(\gamma)\sum_i h_i A_\nu(z_i)$ and similarly for
$s$; the bracket becomes
$\sum_\nu f_\nu(\gamma)\bigl[\sum_i h_i(A_\nu(z_i)-C^*(A_\nu(z_i)
+B_\nu(z_i)))\bigr]$. Group the inner sum over involutive orbits
$\{z_i,\sigma(z_i)\}$. Using (S3),
$B_\nu(z_i)=\kappa(z_i)A_\nu(\sigma(z_i))$, the contribution of
one orbit is
$h_i\bigl[(1-C^*)A_\nu(z_i)-C^*\kappa(z_i)A_\nu(\sigma(z_i))\bigr]
+h_{\sigma(i)}\bigl[(1-C^*)A_\nu(\sigma(z_i))
-C^*\kappa(\sigma(z_i))A_\nu(z_i)\bigr]$.
The linearized constraint
$h_z/\pi^*_z=h_{\sigma(z)}/\pi^*_{\sigma(z)}$, together with the
ratio $\pi^*_{z}/\pi^*_{\sigma(z)}
=\sqrt{\kappa(\sigma(z))/\kappa(z)}$ defining $\pi^*$, makes this
orbit contribution vanish identically --- exactly the
cancellation induced by the orbit relations of Theorem~3.3,
now with $h$ in place of $\pi^*$.
Summing over orbits, the bracket is zero for every
$\nu$ and every $\gamma$, giving $\partial_\epsilon C\equiv0$.
Conversely, vanishing for all $\gamma$ forces each modal
coefficient $\sum_i h_i(A_\nu(z_i)-C^*(A_\nu(z_i)+B_\nu(z_i)))$ to
vanish, by linear independence of $\{f_\nu\}$ on $(0,1)$. By the
same orbit-pairing argument used in Theorem~3.3 together with the
non-degeneracy assumption in (S2) --- the full column rank of
$(A_\nu(z_i))_{\nu,i}$ --- this in turn forces the linearized
ratio constraints on $h$; thus
$h\in V^\perp=\Sigma_{\mathrm{loc}}$. \hfill$\square$

\medskip\noindent\textbf{Proposition 5.8} (Transversal
sensitivity). \textit{Let $h \in T_{\pi^*}\Delta_{m-1}$ violate
the linearized constraint for at least one orbit. Then
$\gamma \mapsto C(\pi^*+\epsilon h,\gamma)$ is not constant
for any $\epsilon \neq 0$ sufficiently small.}

\textit{Proof.} If the linearized constraint fails for some orbit,
then the first-order deviation $\sum_i h_i(A_\nu(z_i)-C^*(A_\nu(z_i)
+B_\nu(z_i)))$ does not vanish for at least one mode $\nu$; that is,
$dU_\nu-C^*dS_\nu\neq0$ for some $\nu$. By Lemma~3.2 the modal
identities $U_\nu=C^*S_\nu$ characterize neutrality mode by mode,
so a non-zero modal deviation makes
$\partial_\epsilon C(\pi^*+\epsilon h,\gamma)\big|_{\epsilon=0}$
a non-constant function of $\gamma$ (it inherits the
$\gamma$-dependence of the corresponding $f_\nu(\gamma)$, which is
non-constant). By continuity of $\partial_\epsilon C$ in
$\epsilon$, this non-vanishing, $\gamma$-dependent first-order
term persists for all sufficiently small $\epsilon\neq0$, so
$\gamma\mapsto C(\pi^*+\epsilon h,\gamma)$ cannot be constant for
such $\epsilon$. \hfill$\square$

\subsection{Phase structure and separatrix
behaviour}

Numerical evidence (Section~\ref{sec:numerics}) reveals that $\Sigma$ acts as a
separatrix: on one side of $\Sigma$ the coupling functional
$C(\pi,\gamma)$ is strictly increasing in $\gamma$; on the other
it is strictly decreasing; and on $\Sigma$ it is constant.
Whether this global monotonicity holds for the entire class
satisfying (S1)--(S4) remains an open question; in the canonical
realization it is established via a structural monotonicity
argument (Appendix~\ref{app:twosite}).

\medskip\noindent\textbf{Remark 5.9.} The picture that emerges
can be summarized as follows: the simplex $\Delta_{m-1}$ is
organized by orbit-induced ratio constraints; the separatrix
$\Sigma$ is the critical manifold where these constraints are
exactly satisfied; and spectral duality acts as a geometric
reduction mechanism that compresses the effective degrees of
freedom and organizes the space of reset distributions around
$\Sigma$.

\begin{figure*}[t]
\centering
\begin{tikzpicture}[scale=1.05]

\node at (-2.0,3.4) {\large\textbf{Two-site case ($m=2$)}};

\fill[blue!10] (-4,0) rectangle (-2.1,0.5);
\fill[red!10]  (-2.1,0) rectangle (0,0.5);

\draw[thick] (-4,0) -- (0,0);

\filldraw[black] (-4,0) circle (2pt);
\filldraw[black] (0,0) circle (2pt);
\node[below=4pt] at (-4,0) {$\delta_{z_1}$};
\node[below=4pt] at (0,0)  {$\delta_{z_2}$};

\filldraw[black] (-2.1,0) circle (2.5pt);
\node[above=4pt] at (-2.1,0) {$\pi^*$};

\node[blue!70!black] at (-3.2,0.85) {$\partial_\gamma C < 0$};
\node[red!70!black]  at (-0.8,0.85) {$\partial_\gamma C > 0$};

\draw[->, thick, >=stealth] (-2.1,-0.75) -- (-0.9,-0.75);
\node[below=3pt] at (-1.5,-0.75) {$\psi(\gamma)$};

\node[align=center] at (-2.0,-2.0)
{\small $\operatorname{sgn}(\partial_\gamma C)
= \operatorname{sgn}\langle\pi-\pi^*,\psi(\gamma)\rangle$};

\node at (5.5,5.5) {\large\textbf{General case ($m\geq 3$)}};

\coordinate (A) at (3,0);
\coordinate (B) at (8,0);
\coordinate (C) at (5.5,4.33);

\fill[blue!12] (A) -- (4.2,2.1) -- (4.9,3.4) -- (5.2,4.0) -- (C) -- cycle;
\fill[gray!20] (4.2,2.1) -- (5.0,1.4) -- (6.0,3.0) -- (4.9,3.4) -- cycle;
\fill[red!12]  (5.0,1.4) -- (B) -- (C) -- (6.0,3.0) -- cycle;

\draw[thick] (A) -- (B) -- (C) -- cycle;

\node[below left=3pt]  at (A) {$\delta_{z_1}$};
\node[below right=3pt] at (B) {$\delta_{z_m}$};
\node[above=4pt]       at (C) {$\delta_{z_k}$};

\draw[thick, dashed, gray!60] (4.1,2.3) -- (5.8,1.2);
\draw[thick, black!80]        (4.5,2.8) -- (6.2,1.7);
\draw[thick, dashed, gray!60] (4.9,3.3) -- (6.6,2.2);

\node[right=2pt, font=\small] at (5.8,1.2) {$\gamma_1$};
\node[right=2pt, font=\small] at (6.2,1.7) {$\gamma_2$};
\node[right=2pt, font=\small] at (6.6,2.2) {$\gamma_3$};
\draw[->, thick, >=stealth] (6.85,1.2) -- (6.85,2.2);
\node[right=2pt, font=\small] at (6.85,1.7) {$\gamma\uparrow$};

\node[rotate=-30, font=\small] at (5.1,2.3)
{$\ker\psi(\gamma)$};

\node[blue!70!black, font=\small]  at (3.85,0.85) {decreasing};
\node[font=\small]                 at (5.25,2.05) {transition};
\node[red!70!black,  font=\small]  at (7.0, 0.85) {increasing};

\node[align=center, font=\small] at (5.5,-1.1)
{$\dim V = 1$: no rotation, two regions};
\node[align=center, font=\small] at (5.5,-1.7)
{$\dim V \geq 2$: rotating $\ker\psi(\gamma)$,
tripartite structure};

\node[align=center] at (5.5,-2.5)
{\small $\operatorname{sgn}(\partial_\gamma C)
= \operatorname{sgn}\langle\pi-\pi^*,\psi(\gamma)\rangle$};

\end{tikzpicture}
\caption{Geometric organization of the reset response landscape.
\textit{Left}: in the two-site case ($m=2$), the critical
distribution $\pi^*$ divides the one-dimensional simplex into
two regions of opposite sign of $\partial_\gamma C$, with
the direction $\psi(\gamma)$ pointing toward the increasing
region.
\textit{Right}: in higher-dimensional simplices ($m\geq 3$),
each $\gamma$ determines a hyperplane $\ker\psi(\gamma)$ that may
rotate with $\gamma$ when $\dim V \geq 2$,
generating decreasing, transition, and increasing regions; the
local separatrix $\Sigma_{\mathrm{loc}}=\bigcap_\gamma\ker\psi(\gamma)$
is their common intersection and does not depend on $\gamma$.
When $\dim V=1$ the hyperplane does not rotate, it coincides with
$\Sigma_{\mathrm{loc}}$, and only two
regions appear (as in Test~3).
In both cases, the response orientation is governed by the
linear functional $\langle\pi-\pi^*,\psi(\gamma)\rangle$.}
\label{fig:global_geometry}
\end{figure*}

\newpage

\section{Global separatrix in the two-site case}\label{sec:twosite}

The geometric and algebraic structure established in Sections~\ref{sec:existence}--\ref{sec:geometric}
is in general formulated locally in the simplex. In the two-site
case ($m=2$), a stronger result holds: the separatrix is
\emph{global}, dividing the entire simplex into exactly two regions
of strictly opposite monotonic behavior. In higher-dimensional
simplices, the sign structure of $\partial_\gamma C$ may in
principle be considerably more intricate.

\medskip\noindent\textbf{Setting.} Consider a single spectral pair
$\{z, z'\}$ with $z' = \sigma(z) \neq z$. The simplex reduces to
the unit interval, parametrized by $\alpha = \pi_z \in [0,1]$
(with $\pi_{z'} = 1-\alpha$); the interior $\Delta_1^\circ =
(0,1)$ is where the main results apply. The coupling functional
takes the form
\[
C(\alpha,\gamma)
= \frac{\alpha\, u(z;\gamma) + (1-\alpha)\, u(z';\gamma)}
       {\alpha\, s(z;\gamma) + (1-\alpha)\, s(z';\gamma)},
\]
with denominator strictly positive by (S2).

\medskip\noindent\textbf{Assumption (M)} (Structural monotonicity).
The function
\[
R(\gamma) := \frac{u(z;\gamma)}{u(z';\gamma)}
\]
is strictly monotone in $\gamma$ on $(0,1)$, with $R'(\gamma)$
of constant non-zero sign.

\medskip\noindent\textbf{Remark 6.1.} Assumption~(M) is not part
of the abstract framework (S1)--(S4). It is an additional
structural property of the underlying process. For the biased
random walk, it follows from a Karlin--McGregor TP$_2$ argument
(Appendix~\ref{app:tp2}). Whether (M) holds for other realizations in the
class satisfying (S1)--(S4) is an open question.

\medskip\noindent\textbf{Theorem 6.2} (Global two-site separatrix).
\textit{Under assumptions (S1)--(S4) and Assumption~(M), the
critical point $\alpha^* \in (0,1)$ of Theorem~3.3 is the unique
global separatrix of the coupling functional on $\Delta_1^\circ$.
Specifically:}
\begin{itemize}
\item[(i)] \textit{$C(\alpha^*,\gamma) = C^*$ for all
      $\gamma \in (0,1)$.}
\item[(ii)] \textit{For $\alpha \neq \alpha^*$, the function
      $\gamma \mapsto C(\alpha,\gamma)$ is strictly monotone of
      constant sign, with the sign of $\partial_\gamma C$
      equal to $\mathrm{sgn}(\alpha-\alpha^*)\cdot\mathrm{sgn}(R'(\gamma))$.}
\end{itemize}
\textit{Moreover,}
\[
\begin{aligned}
\alpha^* &= \tfrac{\sqrt{\kappa(z')}}{\sqrt{\kappa(z)}+\sqrt{\kappa(z')}},\\
C^* &= \tfrac{1}{1+\sqrt{K}},\quad K = \kappa(z)\kappa(z').
\end{aligned}
\]

\textit{Proof.}
\textit{Step 1: existence and uniqueness of $\alpha^*$.}
By (S2) and (S3), for $i \in \{z, z'\}$:
\[
s_i = u_i + \kappa_i\, u_{\sigma(i)},
\]
since $s(z;\gamma) = \sum_\nu f_\nu(\gamma)(A_\nu(z)+B_\nu(z))$
and $B_\nu(z) = \kappa(z)A_\nu(\sigma(z))$ by (S3), so
$\sum_\nu f_\nu B_\nu(z) = \kappa(z)\sum_\nu f_\nu A_\nu(\sigma(z))
= \kappa(z)\,u(\sigma(z);\gamma)$.

By Lemma~3.2, the condition $C(\pi^*,\gamma)\equiv C^*$ reduces to
$U_\nu(\pi^*) = C^*S_\nu(\pi^*)$ for all $\nu$. For $m=2$, using
the relation above:
\[
\begin{aligned}
&(1-C^*)\bigl(\alpha^* u_1 + (1-\alpha^*)u_2\bigr)
= \\
&C^*(\alpha^*\kappa_1 u_2 + (1\!-\!\alpha^*)\kappa_2 u_1).
\end{aligned}
\]
Collecting terms in $u_1$ and $u_2$:
\[
\begin{aligned}
&\bigl[(1-C^*)\alpha^* - C^*(1-\alpha^*)\kappa_2\bigr]u_1\\
+&\bigl[(1-C^*)(1-\alpha^*) - C^*\alpha^*\kappa_1\bigr]u_2 = 0.
\end{aligned}
\]
Using the spectral representation (S2), this becomes
$\sum_\nu f_\nu(\gamma)[\ldots A_\nu(z) + \ldots A_\nu(z')] = 0$
for all $\gamma$. By linear independence of $\{f_\nu\}$, each
bracket vanishes for all $\nu$. By linear independence of
$(A_\nu(z))_\nu$ and $(A_\nu(z'))_\nu$ in $\mathbb{R}^N$ (both
from (S2)), the scalar coefficients must each be zero:
\[
\begin{cases}
(1-C^*)\alpha^* = C^*(1-\alpha^*)\kappa_2, \\
(1-C^*)(1-\alpha^*) = C^*\alpha^*\kappa_1.
\end{cases}
\]
Dividing: $(\alpha^*/(1-\alpha^*))^2 = \kappa_2/\kappa_1$, giving
$\alpha^* = \sqrt{\kappa_2}/(\sqrt{\kappa_1}+\sqrt{\kappa_2})$.
From the first equation:
$(1-C^*)/C^* = (1-\alpha^*)\kappa_2/\alpha^* =
(\sqrt{\kappa_1}/\sqrt{\kappa_2})\kappa_2 = \sqrt{\kappa_1\kappa_2}
= \sqrt{K}$, hence $C^* = 1/(1+\sqrt{K})$.

\textit{Step 2: strict monotonicity.}
A direct calculation (Appendix~\ref{app:factorisation}) gives the factorisation
\begin{equation}
\partial_\gamma C(\alpha,\gamma)
= (\alpha-\alpha^*)\,
\frac{G(\alpha)\,(u_1'u_2-u_1u_2')}
{[\alpha s_1+(1-\alpha)s_2]^2},
\end{equation}
where
$G(\alpha) := (\sqrt{\kappa_1}+\sqrt{\kappa_2})
\bigl(\sqrt{\kappa_1}\,\alpha+\sqrt{\kappa_2}(1-\alpha)\bigr)$
is strictly positive for $\alpha\in(0,1)$, and the denominator is
strictly positive by (S2). Under Assumption~(M),
$R(\gamma) = u_1/u_2$ is strictly monotone, so
$u_1'u_2 - u_1 u_2' = u_2^2 R'(\gamma)$ has constant non-zero
sign. Therefore the entire sign structure of $\partial_\gamma C$ is
encoded in the two simple factors $(\alpha-\alpha^*)$ and
$R'(\gamma)$:
\[
\mathrm{sgn}(\partial_\gamma C) =
\mathrm{sgn}(\alpha-\alpha^*)\cdot\mathrm{sgn}(R'),
\]
establishing part (ii). \hfill$\square$

\medskip\noindent\textbf{Remark 6.3} (Topological structure).
Theorem~6.2 reveals that the separatrix is a global topological
feature of $\Delta_1^\circ$. The interval $(0,1)$ is partitioned
by $\alpha^*$ into two open regions $(0,\alpha^*)$ and
$(\alpha^*,1)$, each with a constant sign of $\partial_\gamma C$.
The point $\alpha^*$ is the unique reset-neutral distribution.
The boundary points $\alpha \in \{0,1\}$ (degenerate
distributions) lie outside the interior analysis but can be
treated by continuity.

\medskip\noindent\textbf{Remark 6.4} (Canonical realization). For
the biased random walk (arXiv:2605.00657) with $\sigma(z) = a-z$
and $\kappa(z) = (p/q)^{a-z}$, one has $\kappa_1 = (p/q)^{a-z_1}$
and $\kappa_2 = (p/q)^{z_1}$, giving
$\sqrt{\kappa_2/\kappa_1} = (q/p)^{a/2-z_1}$ and
\[
\alpha^* = \frac{(q/p)^{a/2-z_1}}{1+(q/p)^{a/2-z_1}},
\]
recovering the two-site critical distribution of Paper~II. The
monotonicity of $R(\gamma)$ is verified via TP$_2$ in Appendix~\ref{app:tp2}.

\section{Canonical realization: the biased random walk}\label{sec:canonical}

We now verify that the biased random walk with multi-site geometric
resetting (Paper~II, arXiv:2605.00657) satisfies the structural
assumptions (S1)--(S4). Consequently, all results of Sections~\ref{sec:existence}--\ref{sec:twosite}
apply.

\subsection{Model and spectral decomposition}

Let $\{X_n\}_{n\ge 0}$ be a biased random walk on $\{0,1,\ldots,a\}$
with absorbing boundaries at $0$ and $a$, transition probabilities
$\mathbb{P}(X_{n+1} = x+1) = p$, $\mathbb{P}(X_{n+1} = x-1) = q
= 1-p$, and reset probability $\gamma \in (0,1)$. For
$z \in \{1,\ldots,a-1\}$, define
\[
\begin{aligned}
u(z;\gamma) &= \mathbb{E}_z\bigl[(1-\gamma)^{\tau_0}\mathbf{1}_{\{\tau_0<\tau_a\}}\bigr],\\
s(z;\gamma) &= \mathbb{E}_z[(1-\gamma)^\tau],\quad \tau = \min\{\tau_0,\tau_a\}.
\end{aligned}
\]
Here $u(z;\gamma)$ is the $\gamma$-discounted ruin probability (probability of absorption at $0$ before $a$, discounted by $(1-\gamma)$ per step), and $s(z;\gamma)$ is the $\gamma$-discounted absorption probability (at either boundary).
Via a Doob $h$-transform
with $h(x) = (q/p)^{x/2}$ (see Paper~II for details), one obtains
the spectral decomposition, valid for all $z\in\{1,\dots,a-1\}$,
\begin{align*}
u(z;\gamma) &= \textstyle\sum_{\nu=1}^{a-1} f_\nu(\gamma)A_\nu(z),\\
s(z;\gamma) &= \textstyle\sum_{\nu=1}^{a-1} f_\nu(\gamma)(A_\nu(z)+B_\nu(z)).
\end{align*}
where
\begin{align*}
f_\nu(\gamma) &= \frac{1-\gamma}{1-\lambda_\nu(1-\gamma)},\quad
\lambda_\nu = 2\sqrt{pq}\cos\tfrac{\pi\nu}{a}.
\end{align*}
\begin{align*}
A_\nu(z) &= \tfrac{2\sqrt{pq}}{a}\bigl(\tfrac{q}{p}\bigr)^{z/2}
\sin\tfrac{\pi\nu z}{a}\sin\tfrac{\pi\nu}{a},\\
B_\nu(z) &= \tfrac{2\sqrt{pq}}{a}\bigl(\tfrac{p}{q}\bigr)^{(a-z)/2}
\sin\tfrac{\pi\nu(a-z)}{a}\sin\tfrac{\pi\nu}{a}.
\end{align*}

\subsection{Verification of (S1)--(S4)}

\begin{itemize}
\item[\textbf{(S1)}] \textit{Regularity.} $C(\pi,\gamma)$ is
      $\mathcal{C}^1$ in $\pi$ since $u$ and $s$ are smooth in
      $\gamma$ and depend linearly on $\pi$.

\item[\textbf{(S2)}] \textit{Spectral representability.} The
      decomposition above is finite with $N = a-1$ modes. The
      functions $\{f_\nu\}_{\nu=1}^{a-1}$ are linearly independent
      on $(0,1)$: the values $\lambda_\nu = 2\sqrt{pq}
      \cos(\pi\nu/a)$ are strictly decreasing in $\nu$ and hence
      distinct, so a vanishing linear combination of the $f_\nu$
      implies all coefficients are zero. For distinct reset sites
      $z_1,\ldots,z_m$, the vectors
      $(A_\nu(z_i))_{\nu=1}^{a-1} \in \mathbb{R}^{a-1}$
      are linearly independent: they are columns of a submatrix
      of the discrete sine transform, which has full column rank.
      Here $m \le a-1$, since the reset sites form a subset of the
      interior $\{1,\ldots,a-1\}$.

\item[\textbf{(S3)}] \textit{Spectral duality.} Set $\sigma(z) =
      a-z$ and $\kappa(z) = (p/q)^{a-z}$. The trigonometric
      identity $\sin(\pi\nu(a-z)/a) = (-1)^{\nu+1}\sin(\pi\nu z/a)$
      gives
      \[
      B_\nu(z) = \kappa(z)\,A_\nu(\sigma(z)).
      \]
      The weights $\kappa(z) = (p/q)^{a-z}$ are independent of
      $\nu$.

\item[\textbf{(S4)}] \textit{Compatibility.}
      $\kappa(z)\kappa(\sigma(z)) = (p/q)^{a-z}(p/q)^{z}
      = (p/q)^a =: K$.
      When $a$ is even, the midpoint $z_0 = a/2$ is a fixed
      point of $\sigma$ (i.e., $\sigma(z_0) = z_0$), and
      $\kappa(z_0) = (p/q)^{a/2} = \sqrt{K}$, as required
      by (S4).
\end{itemize}

Thus the biased random walk satisfies (S1)--(S4).

\subsection{Consequences of the abstract theory}

Applying Theorem~3.3, the critical reset distribution $\pi^*$
satisfies
\[
\frac{\pi^*_z}{\pi^*_{a-z}}
= \Bigl(\frac{q}{p}\Bigr)^{\!a/2-z},
\]
and the invariant value is
\[
C^* = \frac{1}{1+\sqrt{K}}
= \frac{(q/p)^{a/2}}{1+(q/p)^{a/2}}
= q^{(0)}_{a/2}(p),
\]
the classical ruin probability from the midpoint, as established
in Paper~II. Proposition~5.1 identifies $\Sigma$ as the affine
simplex of distributions satisfying the ratio constraints above,
and Theorem~6.2 guarantees that in the two-site case $\Sigma$
acts as a global separatrix dividing $(0,1)$ into two regions
of opposite monotonicity.

\medskip\noindent\textbf{Remark 7.1.} The verification of
Assumption~(M) (strict monotonicity of $R(\gamma) =
u(z;\gamma)/u(z';\gamma)$) for the random walk is non-trivial.
It follows from the Karlin--McGregor total positivity property
of birth--death first-passage distributions and is carried out
in Appendix~\ref{app:tp2}.


\section{Numerical illustrations}\label{sec:numerics}

We present numerical experiments that validate the theoretical
predictions of the previous sections. All computations use the
biased random walk (Section~\ref{sec:canonical}) with parameters $a=10$, $p=0.6$.
Tests~1 and~2 use reset sites $\{3,7\}$ (two-site case);
Test~3 uses reset sites $\{3,5,7\}$ (three-site case, with
$z_2=5$ a fixed point of $\sigma$).

\subsection{Reset-neutral invariance: the separatrix}

Figure~\ref{fig:separatrix_2site} illustrates the defining property
of the critical distribution $\pi^*$: the coupling functional
$C(\alpha^*,\gamma)$ is exactly constant across all $\gamma\in(0,1)$,
confirming Theorem~3.3 to machine precision ($|C(\alpha^*,\gamma)
- C^*| < 10^{-10}$ for all tested values). Distributions with
$\alpha<\alpha^*$ produce a decreasing $C(\alpha,\gamma)$, while
those with $\alpha>\alpha^*$ produce an increasing one, consistent
with the global sign principle of Section~\ref{sec:twosite}.

\begin{figure}[H]
\centering


\caption{The coupling functional $C(\alpha,\gamma)$ as a function
of $\gamma$ for five values of $\alpha$, with reset sites
$\{z_1,z_2\}=\{3,7\}$, $a=10$, $p=0.6$.
The critical distribution $\alpha^*=0.3077$ produces a
\emph{horizontal} curve $C\equiv C^*=0.1164$ (thick black line),
confirming that $\pi^*$ is reset-neutral for all $\gamma\in(0,1)$.
Distributions with $\alpha<\alpha^*$ (blue) are decreasing in
$\gamma$; those with $\alpha>\alpha^*$ (red) are increasing. The
critical distribution thus acts as a global separatrix between
the two monotone regimes, validating not merely reset-neutrality
but the existence of a global boundary in the projective space
of reset distributions.}
\label{fig:separatrix_2site}
\end{figure}

\subsection{Test 1: $\psi(\gamma)$ as a
privileged direction}

Let $\pi^*$ be the critical distribution ($\alpha^*=0.30769$)
and $\psi(\gamma)$ the linear response functional. In the
two-site case the tangent space of $\Delta_1$ is one-dimensional,
so $\psi(\gamma)$ is the unique tangential direction; the test
measures the accuracy of the linear approximation. For a
perturbation of size $\epsilon$, we compute the ratios
$\Delta C(\pi^*\pm\epsilon\psi)/(\epsilon\|\psi\|)$,
which should tend to $1$ as $\epsilon\to 0$.
The result is shown in Figure~\ref{fig:test1_psi}.

\begin{figure}[H]
\centering
\begin{tikzpicture}
\begin{axis}[
    width=\columnwidth,
    height=0.50\columnwidth,
    xlabel={$\epsilon$},
    ylabel={$\Delta C\,/\,(\epsilon\|\psi\|)$},
    xmode=log,
    xmin=8e-6, xmax=1.3e-1,
    ymin=0.78, ymax=1.25,
    grid=major,
    grid style={line width=0.3pt, draw=gray!40},
    legend style={font=\tiny, at={(0.02,0.98)}, anchor=north west,
      inner sep=2pt, row sep=-2pt},
    title style={font=\tiny},
    title={Privileged direction test
      ($\gamma=0.3$,\ $\alpha^*=0.30769$,\ $a=10$,\ $p=0.6$)},
]
\addplot[blue, thick, mark=*, mark size=1.8pt] coordinates {
(1.000000e-05,0.999949)(1.623777e-05,0.999968)(2.636651e-05,0.999988)
(4.281332e-05,1.000011)(6.951928e-05,1.000057)(1.128838e-04,1.000135)
(1.832981e-04,1.000260)(2.976351e-04,1.000464)(4.832930e-04,1.000796)
(7.847600e-04,1.001335)(1.274275e-03,1.002210)(2.069138e-03,1.003634)
(3.359818e-03,1.005951)(5.455595e-03,1.009725)(8.858668e-03,1.015885)
(1.438450e-02,1.025976)(2.335721e-02,1.042593)(3.792690e-02,1.070202)
(6.158482e-02,1.116752)(1.000000e-01,1.197172)
};
\addlegendentry{$+\psi$ direction}
\addplot[red, thick, mark=square*, mark size=1.8pt] coordinates {
(1.000000e-05,0.999916)(1.623777e-05,0.999897)(2.636651e-05,0.999884)
(4.281332e-05,0.999853)(6.951928e-05,0.999806)(1.128838e-04,0.999731)
(1.832981e-04,0.999604)(2.976351e-04,0.999400)(4.832930e-04,0.999069)
(7.847600e-04,0.998532)(1.274275e-03,0.997660)(2.069138e-03,0.996245)
(3.359818e-03,0.993953)(5.455595e-03,0.990242)(8.858668e-03,0.984249)
(1.438450e-02,0.974600)(2.335721e-02,0.959149)(3.792690e-02,0.934614)
(6.158482e-02,0.896184)(1.000000e-01,0.837287)
};
\addlegendentry{$-\psi$ direction}
\addplot[black, dashed, thin, domain=8e-6:1.3e-1] {1};
\addlegendentry{linear prediction}
\end{axis}
\end{tikzpicture}
\caption{Relative variation of $C$ in the directions
$\pm\psi(\gamma)$. Both ratios converge to $1$ as $\epsilon\to 0$,
confirming that $\psi(\gamma)$ is a privileged direction for which
the linear approximation is highly accurate. The asymmetry for
large $\epsilon$ reflects the nonlinear dependence of $C$ on $\pi$.}
\label{fig:test1_psi}
\end{figure}

\subsection{Test 2: linear control (Theorem~6.2)}

We verify the global sign coincidence between $\partial_\gamma C$
and $\langle\pi-\pi^*,\psi(\gamma)\rangle$ over the entire
interval $\alpha\in(0,1)$, at $\gamma=0.3$; the result is
shown in Figure~\ref{fig:test2_theoremD}.

\begin{figure}[H]
\centering
\begin{tikzpicture}
\begin{axis}[
    width=\columnwidth,
    height=0.55\columnwidth,
    xlabel={$\langle\pi-\pi^*,\psi(\gamma)\rangle$},
    ylabel={$\partial_\gamma C(\alpha,\gamma)$},
    xmin=-0.112, xmax=0.252,
    ymin=-0.032, ymax=0.980,
    grid=major,
    grid style={line width=0.3pt, draw=gray!40},
    legend pos=north west,
    legend style={font=\small},
    title style={font=\scriptsize},
    title={Linear control validation
      ($\gamma=0.3$,\ $a=10$,\ $p=0.6$,\ sites $\{3,7\}$)},
]
\addplot[blue, only marks, mark=*, mark size=1.1pt] coordinates {
(-0.10214,-0.02485)(-0.10100,-0.02477)(-0.09986,-0.02470)
(-0.09872,-0.02461)(-0.09758,-0.02453)(-0.09644,-0.02444)
(-0.09530,-0.02435)(-0.09416,-0.02425)(-0.09302,-0.02415)
(-0.09188,-0.02405)(-0.09074,-0.02394)(-0.08960,-0.02383)
(-0.08846,-0.02372)(-0.08732,-0.02361)(-0.08618,-0.02349)
(-0.08504,-0.02336)(-0.08390,-0.02324)(-0.08276,-0.02311)
(-0.08162,-0.02297)(-0.08048,-0.02283)(-0.07934,-0.02269)
(-0.07820,-0.02255)(-0.07706,-0.02240)(-0.07592,-0.02224)
(-0.07478,-0.02209)(-0.07364,-0.02193)(-0.07250,-0.02176)
(-0.07136,-0.02159)(-0.07022,-0.02142)(-0.06908,-0.02124)
(-0.06794,-0.02106)(-0.06680,-0.02087)(-0.06566,-0.02068)
(-0.06452,-0.02048)(-0.06338,-0.02028)(-0.06224,-0.02008)
(-0.06110,-0.01987)(-0.05996,-0.01965)(-0.05882,-0.01944)
(-0.05768,-0.01921)(-0.05654,-0.01898)(-0.05540,-0.01875)
(-0.05426,-0.01851)(-0.05312,-0.01827)(-0.05198,-0.01802)
(-0.05084,-0.01777)(-0.04970,-0.01751)(-0.04856,-0.01724)
(-0.04742,-0.01697)(-0.04628,-0.01670)(-0.04514,-0.01642)
(-0.04400,-0.01613)(-0.04286,-0.01584)(-0.04172,-0.01554)
(-0.04058,-0.01524)(-0.03944,-0.01493)(-0.03830,-0.01461)
(-0.03716,-0.01429)(-0.03603,-0.01396)(-0.03489,-0.01362)
(-0.03375,-0.01328)(-0.03261,-0.01294)(-0.03147,-0.01259)
(-0.03033,-0.01222)(-0.02919,-0.01185)(-0.02805,-0.01148)
(-0.02691,-0.01110)(-0.02577,-0.01071)(-0.02463,-0.01031)
(-0.02349,-0.00991)(-0.02235,-0.00950)(-0.02121,-0.00908)
(-0.02007,-0.00866)(-0.01893,-0.00822)(-0.01779,-0.00779)
(-0.01665,-0.00734)(-0.01551,-0.00688)(-0.01437,-0.00642)
(-0.01323,-0.00594)(-0.01209,-0.00546)(-0.01095,-0.00497)
(-0.00981,-0.00447)(-0.00867,-0.00397)(-0.00753,-0.00345)
(-0.00639,-0.00293)(-0.00525,-0.00239)(-0.00411,-0.00185)
(-0.00297,-0.00130)(-0.00183,-0.00073)(-0.00069,-0.00016)
};
\addlegendentry{$\alpha<\alpha^*$}
\addplot[red, only marks, mark=*, mark size=1.1pt] coordinates {
(0.00045,0.00023)(0.00159,0.00080)(0.00273,0.00139)
(0.00387,0.00199)(0.00501,0.00260)(0.00615,0.00321)
(0.00729,0.00384)(0.00843,0.00448)(0.00957,0.00512)
(0.01071,0.00578)(0.01185,0.00645)(0.01299,0.00713)
(0.01413,0.00782)(0.01527,0.00851)(0.01641,0.00923)
(0.01755,0.00995)(0.01869,0.01068)(0.01983,0.01143)
(0.02097,0.01218)(0.02211,0.01295)(0.02325,0.01372)
(0.02439,0.01451)(0.02553,0.01532)(0.02667,0.01613)
(0.02781,0.01696)(0.02895,0.01779)(0.03009,0.01864)
(0.03123,0.01951)(0.03237,0.02038)(0.03351,0.02126)
(0.03465,0.02216)(0.03579,0.02307)(0.03693,0.02400)
(0.03807,0.02494)(0.03921,0.02589)(0.04035,0.02685)
(0.04149,0.02783)(0.04263,0.02881)(0.04377,0.02981)
(0.04491,0.03083)(0.04605,0.03186)(0.04719,0.03290)
(0.04833,0.03395)(0.04947,0.03502)(0.05061,0.03610)
(0.05175,0.03719)(0.05289,0.03830)(0.05403,0.03942)
(0.05517,0.04056)(0.05631,0.04171)(0.05745,0.04287)
(0.05859,0.04405)(0.05973,0.04524)(0.06087,0.04645)
(0.06201,0.04767)(0.06315,0.04891)(0.06429,0.05015)
(0.06543,0.05142)(0.06657,0.05269)(0.06771,0.05399)
(0.06885,0.05529)(0.06999,0.05661)(0.07113,0.05795)
(0.07227,0.05930)(0.07341,0.06066)(0.07455,0.06204)
(0.07569,0.06344)(0.07683,0.06485)(0.07797,0.06628)
(0.07911,0.06771)(0.08025,0.06917)(0.08139,0.07064)
(0.08253,0.07212)(0.08367,0.07362)(0.08481,0.07514)
(0.08595,0.07667)(0.08709,0.07821)(0.08823,0.07977)
(0.08937,0.08135)(0.09051,0.08294)(0.09165,0.08455)
(0.09279,0.08617)(0.09393,0.08781)(0.09507,0.08946)
(0.09621,0.09113)(0.09735,0.09281)(0.09849,0.09451)
(0.09963,0.09623)(0.10077,0.09796)(0.10191,0.09970)
(0.10305,0.10146)(0.10419,0.10324)(0.10533,0.10503)
(0.10647,0.10684)(0.10761,0.10866)(0.10875,0.11051)
(0.10989,0.11236)(0.11103,0.11423)(0.11217,0.11612)
(0.11331,0.11802)(0.11445,0.11994)(0.11559,0.12187)
(0.11673,0.12382)(0.11787,0.12579)(0.11901,0.12777)
(0.12015,0.12976)(0.12129,0.13178)(0.12243,0.13381)
(0.12357,0.13585)(0.12471,0.13791)(0.12585,0.13999)
(0.12699,0.14208)(0.12813,0.14419)(0.12927,0.14632)
(0.13041,0.14845)(0.13155,0.15061)(0.13269,0.15278)
(0.13383,0.15497)(0.13497,0.15718)(0.13611,0.15940)
(0.13725,0.16163)(0.13839,0.16389)(0.13953,0.16616)
(0.14067,0.16844)(0.14181,0.17074)(0.14295,0.17306)
(0.14409,0.17539)(0.14523,0.17774)(0.14637,0.18011)
(0.14751,0.18249)(0.14865,0.18489)(0.14979,0.18730)
(0.15093,0.18973)(0.15207,0.19218)(0.15321,0.19464)
(0.15435,0.19712)(0.15549,0.19962)(0.15663,0.20213)
(0.15777,0.20466)(0.15891,0.20720)(0.16005,0.20977)
(0.16119,0.21234)(0.16233,0.21494)(0.16347,0.21755)
(0.16461,0.22018)(0.16575,0.22282)(0.16689,0.22548)
(0.16803,0.22816)(0.16917,0.23085)(0.17031,0.23356)
(0.17145,0.23630)(0.17259,0.23904)(0.17373,0.24180)
(0.17487,0.24457)(0.17601,0.24737)(0.17715,0.25018)
(0.17829,0.25301)(0.17943,0.25585)(0.18057,0.25872)
(0.18171,0.26160)(0.18285,0.26449)(0.18399,0.26740)
(0.18513,0.27033)(0.18627,0.27328)(0.18741,0.27624)
(0.18855,0.27922)(0.18969,0.28222)(0.19083,0.28524)
(0.19197,0.28827)(0.19311,0.29132)(0.19425,0.29438)
(0.19539,0.29747)(0.19653,0.30057)(0.19767,0.30369)
(0.19881,0.30682)(0.19995,0.30997)(0.20109,0.31314)
(0.20223,0.31633)(0.20337,0.31954)(0.20451,0.32276)
(0.20565,0.32600)(0.20679,0.32926)(0.20793,0.33253)
(0.20907,0.33583)(0.21021,0.33914)(0.21135,0.34247)
(0.21249,0.34582)(0.21363,0.34918)(0.21477,0.35256)
(0.21591,0.35597)(0.21705,0.35938)(0.21819,0.36282)
(0.21933,0.36628)(0.22047,0.36975)(0.22161,0.37324)
(0.22275,0.37675)(0.22389,0.38028)(0.22503,0.38382)
(0.22617,0.38738)(0.22731,0.39097)(0.22845,0.39457)
(0.22959,0.39819)(0.23073,0.40183)(0.23187,0.40549)
(0.23301,0.40917)(0.23415,0.41287)(0.23529,0.41658)
(0.23643,0.42031)(0.23757,0.42407)
};
\addlegendentry{$\alpha>\alpha^*$}
\addplot[black!70, only marks, mark=*, mark size=3pt]
    coordinates {(0,0)};
\addlegendentry{$\alpha=\alpha^*$}
\draw[black, dashed, thin] (axis cs:-0.112,0)--(axis cs:0.252,0);
\draw[black, dashed, thin] (axis cs:0,-0.032)--(axis cs:0,0.98);
\end{axis}
\end{tikzpicture}
\caption{Validation of Theorem~6.2: $\partial_\gamma C$ vs
$\langle\pi-\pi^*,\psi(\gamma)\rangle$ for $300$ values of
$\alpha\in(0,1)$ at $\gamma=0.3$. Sign agreement is exact
(100\%). The sign changes precisely at $\alpha=\alpha^*$, where
both quantities vanish. The nonlinear shape reflects higher-order
contributions far from $\alpha^*$.}
\label{fig:test2_theoremD}
\end{figure}

See Conjecture~\ref{conj:global} (Section~\ref{sec:conjecture})
for a precise formulation of this principle and further discussion.

\subsection{Phase structure for $m=3$ reset sites}

For $m=3$ with sites $\{3,5,7\}$, the site $z_2=5$ is a fixed
point of $\sigma$ and $\dim V=1$. As shown in
Figure~\ref{fig:test3_tripartite}, the simplex partitions into
exactly two connected regions separated by a straight line
(the separatrix $\Sigma$) running from vertex $z_2=5$ to the
point $(\alpha^*,1-\alpha^*,0)$ on the base, with
$\alpha^*=0.3077$. This behaviour is consistent with the
interpretation of $r=\dim V$ as a complexity index for the
response landscape: when $r=1$, the simplex exhibits a simple
bipartition into two monotone regions separated by a single
separatrix, connecting the integer $r$ directly to a geometric
feature the reader can see.

\begin{figure}[H]
\centering

\caption{Phase partition of $\Delta_2$ for reset sites $\{3,5,7\}$.
Blue: decreasing region ($\partial_\gamma C<0$, 877 points).
Red: increasing region ($\partial_\gamma C>0$, 2126 points).
Both regions are connected and separated by the dashed line
(separatrix $\Sigma$), which runs from vertex $z_2=5$ to the
point $(\alpha^*,1-\alpha^*,0)$ with $\alpha^*=0.3077$.
No transition region appears (0 points), consistent with
$\dim V=1$.}
\label{fig:test3_tripartite}
\end{figure}

\subsection{Quantitative validation}

Table~\ref{tab:validation} compares the theoretical values of
$\alpha^*$ and $C^*$ with their numerically computed
counterparts across five representative parameter
configurations.

\begin{table}[H]
\centering
\resizebox{\columnwidth}{!}{%
\begin{tabular}{ccccccc}
\toprule
$a$ & $p$ & $z_1$ & $z_2$ & $\alpha^*_{\rm th}$ &
$C^*_{\rm th}$ & $|C^*_{\rm num}-C^*_{\rm th}|$ \\
\midrule
$10$ & $0.6$ & $3$ & $7$ & $0.30769231$ & $0.11636364$ &
$4.2\times10^{-17}$ \\
$10$ & $0.6$ & $2$ & $8$ & $0.22857143$ & $0.11636364$ &
$1.4\times10^{-17}$ \\
$10$ & $0.7$ & $3$ & $7$ & $0.15517241$ & $0.01425220$ &
$0$ \\
$10$ & $0.5$ & $3$ & $7$ & $0.50000000$ & $0.50000000$ &
$0$ \\
$8$  & $0.6$ & $2$ & $6$ & $0.30769231$ & $0.16494845$ &
$2.8\times10^{-17}$ \\
\bottomrule
\end{tabular}
}
\caption{Comparison of theoretical and numerical values of
$\alpha^*$ and $C^*$. The numerical $C^*$ is evaluated at
$\gamma=0.5$. All discrepancies are at the level of machine precision.}
\label{tab:validation}
\end{table}

\subsection{Summary of numerical findings}

The numerical experiments confirm the theoretical predictions:
\begin{itemize}
\item $\psi(\gamma)$ is a privileged direction with linear
      approximation error tending to zero as $\epsilon\to 0$,
      for perturbations in both directions (Test~1).
\item The sign of $\partial_\gamma C$ is globally determined
      by $\operatorname{sgn}\langle\pi-\pi^*,\psi(\gamma)\rangle$,
      with exact coincidence over all sampled points and a
      unique zero at $\alpha^*$ (Test~2). Beyond sign agreement,
      the data collapse onto a single monotone curve, suggesting
      that the projection $\langle\pi-\pi^*,\psi(\gamma)\rangle$
      provides an effective one-dimensional coordinate
      controlling the response. The separatrix $\Sigma$
      acts as a global orientation boundary for the reset response.
\item For $m=3$ with sites $\{3,5,7\}$, the sampled simplex
      exhibits two connected regions of opposite sign separated
      by a straight-line separatrix, with no transition points
      detected; this behaviour is consistent with the prediction
      $\dim V=1$ (Test~3). It is in turn consistent with the
      interpretation of $r=\dim V$ as a complexity index for the
      response landscape: when $r=1$, the simplex exhibits a
      simple bipartition into two monotone regions separated by
      a single separatrix.
\item Theoretical values of $\alpha^*$ and $C^*$ are confirmed
      to machine precision across all tested parameters
      (Table~\ref{tab:validation}).
\end{itemize}
These results provide strong empirical support for the geometric
framework of Sections~\ref{sec:setting}--\ref{sec:twosite}.
Moreover, the observed phase-separation structure in the
three-site simplex suggests that the orientation mechanism
identified in the two-site case may extend beyond
one-dimensional families, supporting the global orientation
principle proposed in Section~\ref{sec:conjecture}.

\section{Toward a global orientation principle}
\label{sec:conjecture}

The numerical experiments of Section~\ref{sec:numerics} suggest a structural
property that goes beyond what Theorem~6.2 establishes in the
two-site case. We state it here as an explicit conjecture,
supported by exact sign agreement over all tested configurations.

\begin{conjecture}[Global orientation principle]
\label{conj:global}
Under assumptions \textnormal{(S1)--(S4)}, the sign of
$\partial_\gamma C(\pi,\gamma)$ is globally determined by
\[
\operatorname{sgn}\bigl(\partial_\gamma C(\pi,\gamma)\bigr)
= \operatorname{sgn}\langle\pi - \pi^*, \psi(\gamma)\rangle
\]
for all $\pi\in\Delta_{m-1}^\circ$ and all $\gamma\in(0,1)$.
In particular, $\Sigma$ acts as a global orientation boundary
for the reset response: every distribution on one side of
$\Sigma$ responds monotonically to changes in $\gamma$, with
the sign determined by the side.
\end{conjecture}

\begin{remark}
In the two-site case ($m=2$), Theorem~6.2 proves Conjecture~\ref{conj:global} under the additional Assumption~(M). The extension to $m\geq 3$ and to the full abstract framework remains open; numerical evidence (Section~\ref{sec:numerics}) strongly supports it. A proof likely requires the resolvent representation of $\psi(\gamma)$ to be developed in Paper~IV of this program.
\end{remark}

\medskip\noindent\textbf{Numerical evidence.}
Test~2 of Section~\ref{sec:numerics} verifies the conjecture over 300 uniformly
sampled distributions $\pi\in\Delta_1^\circ$, with exact sign
agreement in all cases (100\%). Test~3 finds zero transition
points over 3003 sampled distributions in $\Delta_2^\circ$
($m=3$, $\dim V=1$), consistent with a clean bipartition of the
simplex by $\Sigma$. These results hold across all tested
parameter values ($a\in\{8,10\}$, $p\in\{0.5, 0.6, 0.7\}$,
multiple site configurations), suggesting that the conjecture
reflects a structural property of the spectral duality rather
than a coincidence of the specific realization.

\medskip\noindent\textbf{Significance.}
If proved in the abstract framework of (S1)--(S4), the global
orientation principle would establish the \emph{spectral response
geometry} as a genuine invariant of processes with twisted
spectral symmetry: the simplex $\Delta_{m-1}$ would carry a
canonical orientation induced by the duality, with $\Sigma$ as
its boundary and $\psi(\gamma)$ as its orientation field. This
would provide a complete topological description of the reset
response landscape, independent of the specific realization.
The proof likely requires the resolvent methods to be developed
in Paper~IV.

\section{Conclusions and perspectives}\label{sec:conclusions}

This paper has developed an abstract framework for the study of
reset-neutral distributions in absorbed Markov processes with
geometric resetting. Starting from four structural conditions
(S1)--(S4) on the coupling functional $C(\pi,\gamma)$, we have
established the existence of a critical manifold
$\Sigma\subset\Delta_{m-1}$, characterized its geometry, and
identified the mechanism by which the spectral duality organizes
the reset response landscape.

\medskip\noindent\textbf{Summary of main results.}
Under assumptions (S1)--(S4), the following hold.

\begin{itemize}
\item[(i)] There exists a non-empty family of reset-neutral
distributions $\pi^*\in\Delta_{m-1}$ such that
$C(\pi^*,\gamma) = C^* = 1/(1+\sqrt{K})$ for all
$\gamma\in(0,1)$, where $K$ is the structural compatibility
constant. The value $C^*$ depends only on $K$, not on the
specific choice of $\pi^*$ nor on the neutral weights
(Theorem~3.3).

\item[(ii)] The separatrix $\Sigma$ is an affine simplex of
dimension $n_{\rm pair}+n_{\rm neutral}-1$, determined by explicit
ratio constraints on the reset weights (Proposition~5.1).
Perturbations tangential to $\Sigma$ preserve reset-neutrality
to first order, while transversal perturbations destroy it
(Propositions~5.7--5.8).

\item[(iii)] Under an additional parity condition, the spectral
coefficients $[A_\nu(z):B_\nu(z)]$ are confined to two
privileged points $[1:\pm\epsilon_\nu\kappa(z)]$ in
$\mathbb{P}^1$, and $\pi^*$ aligns the spectral channels to
the ratio $\sqrt{K}$ in a weighted average sense
(Lemma~4.1, Proposition~4.3).

\item[(iv)] In the two-site case, under Assumption~(M), the
separatrix is global: it divides $\Delta_1^\circ$ into two
regions of strictly opposite monotonic behavior, with
$\operatorname{sgn}(\partial_\gamma C) =
\operatorname{sgn}(\alpha-\alpha^*)\cdot\operatorname{sgn}(R')$
(Theorem~6.2).

\item[(v)] The biased random walk with multi-site geometric
resetting satisfies (S1)--(S4) and Assumption~(M), recovering
the results of Papers~I--II as a canonical realization
(Section~\ref{sec:canonical}).
\end{itemize}

\medskip\noindent\textbf{The response landscape.}
The results above reveal that the simplex $\Delta_{m-1}$
carries a non-trivial \emph{spectral response geometry}
induced by the duality structure: a foliation by level sets
of $C$, organized around the critical manifold $\Sigma$,
with the linear functional $\psi(\gamma)$ playing the role
of a global orientation field. Numerical experiments (Section~\ref{sec:numerics}) suggest
a global orientation principle beyond Theorem~6.2:

\medskip\noindent\textbf{Conjecture} (Global orientation
principle). \textit{Under (S1)--(S4), the sign of
$\partial_\gamma C(\pi,\gamma)$ is determined globally by
$\operatorname{sgn}\langle\pi-\pi^*,\psi(\gamma)\rangle$
for all $\pi\in\Delta_{m-1}^\circ$ and all $\gamma\in(0,1)$.
In particular, $\Sigma$ acts as a global orientation boundary
for the reset response.}

\medskip\noindent\textbf{Perspectives.}
The present work raises several natural questions.

\textit{Operator-theoretic origin (Paper~IV).} The spectral
duality (S3) and the associated geometry have a natural
interpretation in terms of the resolvent of the underlying
Markov process. The field $\psi(\gamma)$ is expected to admit
a resolvent representation, and the privileged points
$[1:\pm\sqrt{K}]$ in $\mathbb{P}^1$ should emerge from the
Perron--Frobenius structure of the generator. Making this
precise would explain why the twisted symmetry
$B_\nu(z) = \kappa(z)A_\nu(\sigma(z))$ arises in a broad
class of absorbed Markov chains with resetting.

\textit{Information geometry and non-equilibrium thermodynamics
(Paper~V).} The response landscape has a natural thermodynamic
interpretation. The flow
\[
\dot\pi = -\Pi_{T_\pi\Delta}\!\left(
\int\psi(\gamma)\,\partial_\gamma C(\pi,\gamma)\,d\gamma
\right)
\]
is induced directly by the observable $C$ and the response
field $\psi(\gamma)$. The functional
\[
F_3(\pi) = \int\langle\pi-\pi^*,\psi(\gamma)\rangle^2\,d\gamma
\]
emerges as a natural Lyapunov candidate from the structure
of the observable, not by postulation. Two concrete questions
make the connection with information geometry precise.
First, the Fisher--Rao metric on $\Delta_{m-1}$ is a canonical
Riemannian structure on the simplex; a key open question is
whether $\Sigma$ is a geodesic submanifold under this metric,
which would give the separatrix a thermodynamic interpretation
as a surface of minimal informational cost.
Second, $F_3(\pi)$ is reminiscent of relative entropy in
non-equilibrium systems: if $F_3$ decreases monotonically
along the flow, it would serve as a genuine Lyapunov function
with the interpretation that the system dissipates reset
response energy as it relaxes toward the neutral manifold~$\Sigma$.

\textit{Universality and classification.} The global
orientation conjecture, if proved in the abstract framework
of (S1)--(S4), would establish the response landscape as a
structural invariant of processes with twisted spectral
symmetry. A classification of response landscapes in terms
of $\dim V = \dim\operatorname{span}\{\psi(\gamma)\}$ ---
two regions for $\dim V=1$, tripartite structure for
$\dim V\geq 2$ --- would provide a topological picture of
the geometry of reset distributions beyond the cases studied
here.


\appendix
\section{Technical proofs for the two-site case}
\label{app:twosite}

\subsection{Factorisation of $\partial_\gamma C$}
\label{app:factorisation}

We derive the factorisation used in the proof of Theorem~6.2.
With the notation of Section~\ref{sec:twosite}, write $u_1 = u(z;\gamma)$,
$u_2 = u(z';\gamma)$, $s_1 = s(z;\gamma)$, $s_2 = s(z';\gamma)$,
and
\[
C(\alpha,\gamma) = \frac{\alpha u_1 + (1-\alpha)u_2}
                       {\alpha s_1 + (1-\alpha)s_2}.
\]

Differentiating with respect to $\gamma$ via the quotient rule,
\[
\partial_\gamma C = \frac{N(\alpha,\gamma)}
{[\alpha s_1 + (1-\alpha)s_2]^2},
\]
where
{\small\begin{align*}
N &= \bigl(\alpha u_1'+(1-\alpha)u_2'\bigr)
\bigl(\alpha s_1+(1-\alpha)s_2\bigr) \\
&\quad - \bigl(\alpha u_1+(1-\alpha)u_2\bigr)
\bigl(\alpha s_1'+(1-\alpha)s_2'\bigr).
\end{align*}}

Expanding both products:
{\small\begin{align*}
N &= \alpha^2(u_1's_1-u_1s_1')
+ (1-\alpha)^2(u_2's_2-u_2s_2') \\
&\quad + \alpha(1-\alpha)(u_1's_2+u_2's_1-u_1s_2'-u_2s_1').
\end{align*}}

By (S2) and (S3), $s_i = u_i + \kappa_i u_{\sigma(i)}$ and
$s_i' = u_i' + \kappa_i u_{\sigma(i)}'$, with
$\kappa_1=\kappa(z)$, $\kappa_2=\kappa(z')$.

\medskip\noindent\textit{Diagonal terms.}
\[
u_i's_i - u_is_i' = \kappa_i(u_i'u_{\sigma(i)} - u_iu_{\sigma(i)}').
\]

\medskip\noindent\textit{Cross terms.}
\[
\begin{split}
u_1's_2&+u_2's_1-u_1s_2'-u_2s_1'\\
&= (u_1'u_2-u_1u_2') + (u_2'u_1-u_2u_1') = 0,
\end{split}
\]
since the two terms cancel. Substituting:
\[
N = \bigl[\alpha^2\kappa_1-(1-\alpha)^2\kappa_2\bigr]
    \bigl(u_1'u_2-u_1u_2'\bigr).
\]

\medskip\noindent\textit{Factorisation of the algebraic factor.}
Since $\alpha^* = \sqrt{\kappa_2}/(\sqrt{\kappa_1}+\sqrt{\kappa_2})$,
the factor $\alpha^2\kappa_1-(1-\alpha)^2\kappa_2$ factors as
\[
\begin{split}
\alpha^2\kappa_1-(1-\alpha)^2\kappa_2 =
{}&(\sqrt{\kappa_1}+\sqrt{\kappa_2})(\alpha-\alpha^*)\\
&\times(\sqrt{\kappa_1}\,\alpha+\sqrt{\kappa_2}(1-\alpha)).
\end{split}
\]
The factor $\bigl(\sqrt{\kappa_1}\,\alpha+\sqrt{\kappa_2}(1-\alpha)\bigr)$
is strictly positive for all $\alpha\in(0,1)$, as is
$(\sqrt{\kappa_1}+\sqrt{\kappa_2})$. Substituting this factorisation
into $N$ and dividing by the denominator gives

\begin{equation}
\partial_\gamma C(\alpha,\gamma) = (\alpha-\alpha^*)\,
\frac{G(\alpha)\,(u_1'u_2-u_1u_2')}
{[\alpha s_1+(1-\alpha)s_2]^2},
\end{equation}
with $G(\alpha) := (\sqrt{\kappa_1}+\sqrt{\kappa_2})
\bigl(\sqrt{\kappa_1}\,\alpha+\sqrt{\kappa_2}(1-\alpha)\bigr) > 0$.
Using $u_1'u_2-u_1u_2' = u_2^2R'(\gamma)$ with $R=u_1/u_2$, and
noting that $G(\alpha)$ and the
denominator are strictly positive, the sign of
$\partial_\gamma C$ reduces to
\[
\operatorname{sgn}(\partial_\gamma C) =
\operatorname{sgn}(\alpha-\alpha^*)\cdot\operatorname{sgn}(R'(\gamma)).
\]
Under Assumption~(M), $R'(\gamma)$ has constant nonzero sign, so
the entire sign structure of $\partial_\gamma C$ is encoded in the
monotonicity of $R$ and the sign of $\alpha-\alpha^*$.
\hfill$\square$

\subsection{Verification of Assumption~(M) via
Karlin--McGregor TP$_2$}
\label{app:tp2}

We verify that $R(\gamma)=u(z;\gamma)/u(z';\gamma)$ is
strictly monotone for the biased random walk of Section~\ref{sec:canonical}.

\begin{definition}
A family $\{f_\nu(\gamma)\}$ is \emph{totally positive of
order~2} (TP$_2$) if for all $\nu_1<\nu_2$ and
$\gamma_1<\gamma_2$,
\[
f_{\nu_1}(\gamma_1)f_{\nu_2}(\gamma_2)
-f_{\nu_1}(\gamma_2)f_{\nu_2}(\gamma_1) > 0.
\]
\end{definition}

\begin{lemma}[TP$_2$ of the spectral weights]
\label{lem:tp2_fnu}
The family $\{f_\nu(\gamma)\}_{\nu=1}^{a-1}$, with
$f_\nu(\gamma) = (1-\gamma)/(1-\lambda_\nu(1-\gamma))$,
is TP$_2$ as a function of $(\nu,\gamma)$.
\end{lemma}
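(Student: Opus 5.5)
The plan is a direct computation of the $2\times2$ minor. The monotone reparametrization $x := 1-\gamma\in(0,1)$ reverses order, so $\gamma_1<\gamma_2$ corresponds to $x_1>x_2$. In this variable $f_\nu = x/(1-\lambda_\nu x)$.

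First, I check that all the denominators are strictly positive. Since $2\sqrt{pq}\le 1$ and $|\cos(\pi\nu/a)|<1$ for $\nu\in\{1,\ldots,a-1\}$, we have $|\lambda_\nu|<1$. Hence $1-\lambda_\nu x>0$ for $x\in(0,1)$, so each $f_\nu$ is strictly positive and smooth on $(0,1)$. This also means that cross-multiplying will not flip any inequality.

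Second, I compute the minor for $\nu_1<\nu_2$ and $\gamma_1<\gamma_2$. Putting it over the common positive denominator $\prod_{j,k}(1-\lambda_{\nu_j}x_k)$ gives
\[
f_{\nu_1}(\gamma_1)f_{\nu_2}(\gamma_2)-f_{\nu_1}(\gamma_2)f_{\nu_2}(\gamma_1)
= \frac{x_1x_2\,\bigl[(1-\lambda_{\nu_1}x_2)(1-\lambda_{\nu_2}x_1)-(1-\lambda_{\nu_1}x_1)(1-\lambda_{\nu_2}x_2)\bigr]}{(1-\lambda_{\nu_1}x_1)(1-\lambda_{\nu_1}x_2)(1-\lambda_{\nu_2}x_1)(1-\lambda_{\nu_2}x_2)}.
\]
In the bracket the constant terms and the $x_1x_2$ terms cancel. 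What remains is $(\lambda_{\nu_1}-\lambda_{\nu_2})(x_1-x_2)$.

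Third, I settle the sign. As already used in the verification of (S2), $\lambda_\nu = 2\sqrt{pq}\cos(\pi\nu/a)$ is strictly decreasing in $\nu$ on $\{1,\ldots,a-1\}$, because $\cos$ is strictly decreasing on $(0,\pi)$. So $\lambda_{\nu_1}-\lambda_{\nu_2}>0$. Also $x_1-x_2=\gamma_2-\gamma_1>0$. Both factors, and the prefactor $x_1x_2$ over the positive denominator, are therefore positive, and the minor is strictly positive. Equivalently, the ratio $f_{\nu_1}/f_{\nu_2}=(1-\lambda_{\nu_2}x)/(1-\lambda_{\nu_1}x)$ has $x$-derivative $(\lambda_{\nu_1}-\lambda_{\nu_2})/(1-\lambda_{\nu_1}x)^2>0$, so it is strictly decreasing in $\gamma$, which is exactly the stated TP$_2$ inequality.

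No step is a real obstacle. The only points needing care are the orientation reversal $\gamma\leftrightarrow x$, which fixes the sign convention of the definition, and the strict bound $|\lambda_\nu|<1$ needed for positivity of the denominators. The bound also holds at $p=1/2$, because $\nu$ never equals $0$ or $a$.
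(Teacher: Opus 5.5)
Your proposal is correct and is essentially the paper's direct verification: after an appeal to Karlin's general theory, the paper computes the infinitesimal form $f_\nu'f_\mu-f_\mu'f_\nu=(\lambda_\mu-\lambda_\nu)(1-\gamma)^2/\bigl([1-\lambda_\nu(1-\gamma)]^2[1-\lambda_\mu(1-\gamma)]^2\bigr)$, which is your ratio-derivative remark, whereas your main computation of the finite $2\times2$ minor yields the stated inequality directly. Your explicit handling of the orientation reversal $x=1-\gamma$ and of the positivity of the denominators covers two points the paper leaves implicit.
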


\begin{proof}
Since $f_\nu(\gamma)$ is a fractional linear transform of
$\lambda_\nu(1-\gamma)$ that preserves order, and since
$\lambda_\nu = 2\sqrt{pq}\cos(\pi\nu/a)$ is strictly decreasing
in $\nu$, the kernel $(\nu,\gamma)\mapsto f_\nu(\gamma)$ is
TP$_2$ (see Karlin, 1968, Ch.~5). This can also be verified
directly: a computation gives
\begin{align*}
f_\nu'(\gamma)f_\mu(\gamma) &- f_\mu'(\gamma)f_\nu(\gamma) \\
&= \frac{(\lambda_\mu-\lambda_\nu)(1-\gamma)^2}
{\bigl[1-\lambda_\nu(1-\gamma)\bigr]^2\bigl[1-\lambda_\mu(1-\gamma)\bigr]^2},
\end{align*}
which has constant sign $-\operatorname{sgn}(\lambda_\nu-\lambda_\mu)$
for all $\gamma\in(0,1)$, confirming the TP$_2$ property.
\hfill$\square$
\end{proof}

\begin{proposition}[Monotonicity of $R(\gamma)$]
\label{prop:monotonicity_R}
For any two distinct reset sites $z\neq z'$, the ratio
$R(\gamma) = u(z;\gamma)/u(z';\gamma)$ is strictly monotone
on $(0,1)$.
\end{proposition}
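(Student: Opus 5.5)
The plan is to argue probabilistically rather than mode by mode. Lemma~\ref{lem:tp2_fnu} alone will not suffice: the coefficients $A_\nu(z)$ alternate in sign with $\nu$, so TP$_2$ of $f_\nu$ cannot simply be pushed through the sum $u=\sum_\nu f_\nu A_\nu$.

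Write $x=1-\gamma\in(0,1)$ and let $g_z(n):=\mathbb{P}_z(\tau_0=n,\ \tau_0<\tau_a)$, so that
\[
u(z;\gamma)=\sum_{n\ge1}g_z(n)\,x^n .
\]
Define the tilted probability law $Q^{(x)}_z(n):=g_z(n)x^n/u(z;\gamma)$ on $\mathbb{N}$, with mean $m_z(x)$. Differentiating the logarithm gives
\[
\frac{d}{dx}\log u(z;\gamma)=\frac{m_z(x)}{x},
\]
and hence
\[
\frac{d}{d\gamma}\log R(\gamma)=-\frac{m_z(x)-m_{z'}(x)}{x}.
\]
Without loss of generality take $z<z'$ (otherwise pass to $1/R$). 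Strict monotonicity of $R$ then reduces to the strict inequality $m_z(x)<m_{z'}(x)$ for every $x\in(0,1)$. The conclusion will be $R'(\gamma)>0$ for $z<z'$, which is the sign used in Theorem~6.2.

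The key structural input is a monotone likelihood ratio property. We will show that for $z<z'$ the ratio $g_{z'}(n)/g_z(n)$ is nondecreasing in $n$ on the support of $g_z$; equivalently, the kernel $(z,n)\mapsto g_z(n)$ is TP$_2$.
\begin{itemize}
\item Multiplying both laws by the common factor $x^n$ leaves the likelihood ratio unchanged, so $Q^{(x)}_{z'}$ dominates $Q^{(x)}_z$ in the likelihood-ratio order for every $x$.
\item The likelihood-ratio order implies stochastic order and hence $m_z(x)\le m_{z'}(x)$.
\item Strictness follows because the two laws differ: $g_{z'}(n)=0$ for $n<z'$, while $g_z(z)=q^z>0$.
\end{itemize}
Since the likelihood-ratio order is transitive, it suffices to treat adjacent sites $z'=z+1$.

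To obtain TP$_2$ I would use the Karlin--McGregor theory for the killed birth--death chain on $\{1,\dots,a-1\}$, with killing at both $0$ and $a$. Its sub-stochastic transition matrix $P$ is tridiagonal with nonnegative entries, hence totally positive, and
\[
g_z(n)=q\,(P^{n-1})(z,1).
\]
I would try first the Karlin--McGregor representation of first-passage times of birth--death chains. Via the spectral decomposition of Section~\ref{sec:canonical}, this gives $g_z(n)=\sum_\nu c_\nu A_\nu(z)\lambda_\nu^{n-1}$. One then invokes the classical result (Karlin~1968, Ch.~5; Karlin--McGregor~1959) that the hitting time of the lower boundary from $z$ is TP$_2$ in $(z,n)$; in the continuous-time analogue it is a convolution of exponentials, hence PF$_2$ and log-concave.

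The fallback is the strong Markov decomposition $g_{z+1}=h*g_z$, where $h(k)=\mathbb{P}_{z+1}(\tau_z=k<\tau_a)$. It combines the fact that convolution with a nonnegative kernel preserves the likelihood-ratio order of a log-concave $g_z$ with a proof that $g_z$ is log-concave (PF$_2$). That log-concavity is itself a TP$_2$ statement on the minors of $P^{n}$.

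The main obstacle is establishing TP$_2$ (or log-concavity) of $g_z(n)$ with the two-sided killing. It is exactly here that the Karlin--McGregor total positivity is needed; everything else (tilting, the ordering of means, transitivity, and the sign bookkeeping $dx/d\gamma=-1$) is routine.
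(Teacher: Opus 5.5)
There is a genuine gap, and it sits exactly at the step you flag as the main obstacle. The TP$_2$/log-concavity input you want is not just hard to prove for this chain; it is false. The walk has no holding probability, so $g_z(n)=0$ unless $n\equiv z \pmod 2$.

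\emph{TP$_2$ in $(z,n)$ fails.} For adjacent sites the supports of $g_z$ and $g_{z+1}$ are disjoint and interleaved. Taking $n=z+1<n'=z+2$ gives $g_z(z+1)\,g_{z+1}(z+2)=0<g_z(z+2)\,g_{z+1}(z+1)$. So $(z,n)\mapsto g_z(n)$ is not TP$_2$, and $Q^{(x)}_z\le_{\mathrm{lr}}Q^{(x)}_{z+1}$ fails.

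\emph{The weakened version does not help.} ``Nondecreasing on the support of $g_z$'' is vacuous here, since the ratio is identically $0$ on that support. It also yields no stochastic order, because the likelihood-ratio condition must hold on the union of the supports.

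\emph{Your other ingredients fail for the same reason.} $P$ has zero diagonal, so its principal minor on $\{i,i+1\}$ equals $-pq<0$ and $P$ is not totally positive; consistently, its spectrum satisfies $\lambda_{a-\nu}=-\lambda_\nu$. Likewise $g_z$ is not log-concave, since $g_z(z+1)^2=0<g_z(z)\,g_z(z+2)$. So the continuous-time PF$_2$ picture does not transfer to this periodic discrete-time chain. When $z'-z$ is odd the two laws have infinite, interleaved supports, so the likelihood-ratio order fails in either direction and no variant of this route can handle those pairs.

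The missing observation is that your fallback decomposition already finishes the proof with no positivity input. The walk is skip-free downward, so for any $z<z'$ the strong Markov property at $\tau_z$ gives $u(z';\gamma)=\phi(x)\,u(z;\gamma)$, where $\phi(x)=\mathbb{E}_{z'}[x^{\tau_z};\tau_z<\tau_a]=\sum_{k\ge z'-z}h(k)x^k$. In your language, $Q^{(x)}_{z'}=\tilde h^{(x)}*Q^{(x)}_z$ with $\tilde h^{(x)}(k)=h(k)x^k/\phi(x)$. Means add under convolution, so $m_{z'}(x)=m_z(x)+\tilde m_h(x)\ge m_z(x)+(z'-z)$. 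Equivalently, $R(\gamma)=1/\phi(1-\gamma)$, where $\phi$ is a power series with nonnegative coefficients and $h(z'-z)=q^{z'-z}>0$; hence $R$ is strictly increasing in $\gamma$ for $z<z'$.

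This is more elementary than the paper's argument. The paper simply cites Karlin--McGregor total positivity of $(z,\gamma)\mapsto u(z;\gamma)$, a property it itself notes is equivalent to the claim. The two-variable positivity actually needed, in $(z,1-\gamma)$, does hold. The stronger one in $(z,n)$ that you aimed for does not.

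Your sign, $R'>0$ for $z<z'$, is correct. For example, with $a=4$ one gets $u(1;\gamma)/u(3;\gamma)=(1-pqx^2)/(q^2x^2)$. This agrees with Theorem~6.2 and Figure~\ref{fig:separatrix_2site}, but contradicts the closing remark of Appendix~\ref{app:tp2}, which claims $R$ decreases for $z<a/2$.
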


\begin{proof}
The function $u(z;\gamma)=(1-\gamma)\,[(I-(1-\gamma)P)^{-1}b^{(0)}]_z$
is, up to the positive factor $(1-\gamma)$, the discounted Green
function of the absorbed birth--death chain evaluated at the
absorbing flux $b^{(0)}$. By the Karlin--McGregor theory of
birth--death processes~\cite{KarlinMcGregor1959,Karlin1968}, the
Green kernel of such a chain is totally positive of order two
(TP$_2$) jointly in the state variable and the resolvent
parameter; concretely, the kernel $(z,\gamma)\mapsto u(z;\gamma)$
satisfies, for all $z\neq z'$ and $\gamma\neq\gamma'$,
\[
\operatorname{sgn}\bigl[u(z;\gamma)u(z';\gamma')
-u(z;\gamma')u(z';\gamma)\bigr]
\]
constant (independent of the particular $\gamma,\gamma'$). A
kernel is TP$_2$ in $(z,\gamma)$ precisely when the ratio
$R(\gamma)=u(z;\gamma)/u(z';\gamma)$ is monotone in $\gamma$ for
every fixed pair $z\neq z'$; the sign of the $2\times2$ minor
fixes the direction of monotonicity. Hence $R(\gamma)$ is
strictly monotone on $(0,1)$.

We emphasize that monotonicity does \emph{not} follow from a
term-by-term sign argument on the spectral expansion
$u(z;\gamma)=\sum_\nu f_\nu(\gamma)A_\nu(z)$: the spatial Wronskians
$A_\nu(z)A_\mu(z')-A_\nu(z')A_\mu(z)$ do not all share a common
sign, so the individual terms of $R'(\gamma)$ need not. It is the
total positivity of the assembled Green kernel --- a property of
the full sum, not of its summands --- that yields the result.
\hfill$\square$
\end{proof}

\begin{remark}
For the two-site case with $z'=a-z$, a direct computation
using the explicit formulas of Section~\ref{sec:canonical} shows that $R(\gamma)$
is strictly decreasing when $z<a/2$.
\end{remark}

\begin{acknowledgments}

The author thanks the Universidad de Salamanca for institutional
support, and acknowledges stimulating discussions with colleagues
at various stages of this work.

\end{acknowledgments}

\end{document}